\documentclass{amsart}

\usepackage{amsmath,amssymb,amsthm}
\setlength\parindent{0pt}

\usepackage{graphicx,tikz}
\newtheorem{theorem}{Theorem}

\newtheorem{lemma}{Lemma}

\theoremstyle{definition}

\theoremstyle{remark}

\begin{document}

\title[]{On the Wasserstein Distance between Classical Sequences and the Lebesgue Measure}
\keywords{Low discrepancy sequence, Wasserstein distance, Integration Error, Exponential Sum, Erd\H{o}s-Tur\'an inequality, Irregularities of Distribution, Approximation Theory.}
\subjclass[2010]{11L07, 41A25, 42B05, 65D30.}

\author[]{Louis Brown}
\address{Department of Mathematics, Yale University, New Haven, CT 06511, USA}
\email{louis.brown@yale.edu}

\author[]{Stefan Steinerberger}
\address{Department of Mathematics, Yale University, New Haven, CT 06511, USA}
\email{stefan.steinerberger@yale.edu}

\thanks{This paper is part of L.B.'s PhD thesis, he gratefully acknowledges support from Yale Graduate School. S.S. is partially supported by the NSF (DMS-1763179) and the Alfred P. Sloan Foundation.}

\begin{abstract} We discuss the classical problem of measuring the regularity of distribution of sets of $N$ points in $\mathbb{T}^d$. A recent line of investigation is to study the cost ($=$ mass $\times$ distance) necessary to move Dirac measures placed on these points to the uniform distribution. We show that Kronecker sequences satisfy optimal transport distance in $d \geq 2$ dimensions. This shows that for differentiable $f: \mathbb{T}^d \rightarrow \mathbb{R}$ and badly approximable vectors $\alpha \in \mathbb{R}^d$, we have
$$ \left|  \int_{\mathbb{T}^d} f(x) dx - \frac{1}{N} \sum_{k=1}^{N} f(k \alpha) \right| \leq c_{\alpha}  \frac{ \| \nabla f\|^{(d-1)/d}_{L^{\infty}}\| \nabla f\|^{1/d}_{L^{2}}  }{N^{1/d}}.$$
We note that the result is uniform in $N$ (it holds for a sequence instead of a set). Simultaneously, it refines the classical integration error for Lipschitz functions, $\| \nabla f\|_{L^{\infty}} N^{-1/d}$. We obtain a similar improvement for numerical integration with respect to the regular grid.
The main ingredient is an estimate involving Fourier coefficients of a measure; this allows for existing estimates to be conveniently `recycled'. We present several open problems.
\end{abstract}

\maketitle

\vspace{-20pt}

\section{Introduction}
\subsection{Introduction}
We study the problem of measuring the regularity of point sets $\left\{x_1, \dots, x_N\right\} \subset \mathbb{T}^d$ as well as infinite sequences. There are many classical notions of regularity as well as
good constructions of sets minimizing these notions that have
been proposed; we refer to the classical textbooks \cite{chazelle, dick, drmota, kuipers}. 

\begin{center}
\begin{figure}[h!]
\begin{tikzpicture}[scale=1.2]
\draw [thick] (0,0) -- (10,0);
\filldraw (0,0) circle (0.04cm);
\draw [very thick] (0,0) -- (0,0.2);
\filldraw (10/29,0) circle (0.05cm);
\draw [very thick] (10/29,0) -- (10/29,0.4);
\filldraw (40/29,0) circle (0.05cm);
\draw [very thick] (40/29,0) -- (40/29,0.4);
\filldraw (50/29,0) circle (0.05cm);
\draw [very thick] (50/29,0) -- (50/29,0.4);
\filldraw (60/29,0) circle (0.05cm);
\draw [very thick] (60/29,0) -- (60/29,0.4);
\filldraw (70/29,0) circle (0.05cm);
\draw [very thick] (70/29,0) -- (70/29,0.4);
\filldraw (90/29,0) circle (0.05cm);
\draw [very thick] (90/29,0) -- (90/29,0.4);
\filldraw (130/29,0) circle (0.05cm);
\draw [very thick] (130/29,0) -- (130/29,0.4);
\filldraw (160/29,0) circle (0.05cm);
\draw [very thick] (160/29,0) -- (160/29,0.4);
\filldraw (200/29,0) circle (0.05cm);
\draw [very thick] (200/29,0) -- (200/29,0.4);
\filldraw (220/29,0) circle (0.05cm);
\draw [very thick] (220/29,0) -- (220/29,0.4);
\filldraw (230/29,0) circle (0.05cm);
\draw [very thick] (230/29,0) -- (230/29,0.4);
\filldraw (240/29,0) circle (0.05cm);
\draw [very thick] (240/29,0) -- (240/29,0.4);
\filldraw (250/29,0) circle (0.05cm);
\draw [very thick] (250/29,0) -- (250/29,0.4);
\filldraw (280/29,0) circle (0.05cm);
\draw [very thick] (280/29,0) -- (280/29,0.4);
\end{tikzpicture}
\caption{The renormalized quadratic residues in $\mathbb{F}_{29}$ rescaled to $[0,1]$. Every dot except the one at zero represents two quadratic residues corresponding to two Dirac delta measures. How costly is it to move this point measure to the uniform distribution on $[0,1]$?}
\end{figure}
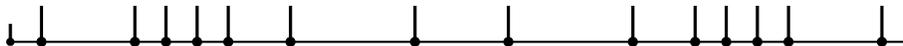
\end{center}

\vspace{-10pt}

The classical theory has developed a useful machinery in terms of exponential sums that exploits regularities of number-theoretic constructions. We will not, initially, pursue this
path and instead ask a different question: consider the measure
$$ \mu = \frac{1}{N} \sum_{k=1}^{N} \delta_{x_k}.$$
How would we go about distributing this measure in such a way that the end result is the Lebesgue measure on $\mathbb{T}^d$?  Here, the `cost' of transporting $\delta$ units of measure across
distance $d$ is understood to be $\delta \cdot d$. An even more practical example is the following: suppose we have people evenly distributed over $\mathbb{T}^d$ and $N$ supermarkets placed in
$\left\{x_1, \dots, x_N\right\} \subset \mathbb{T}^d$. Demand and supply are exactly matched: how far would the trucks have to drive to distribute the goods from the supermarkets evenly? This is Monge's transportation
problem from 1781. It is easy to see that
$$ \mbox{transportation cost} \geq c_d N^{-1/d},$$
where $c_d$ is a universal constant depending only on the dimension (see \S 1.2 for a sketch of the argument). This scaling is, for example, assumed by a rescaling of $\mathbb{Z}^d$ intersected with $\mathbb{T}^d \cong [0,1]^d$. Our paper was motivated by the following questions
\begin{enumerate}
\item Do the classical constructions of regular sequences in $\mathbb{T}^d$ from  \cite{chazelle, dick, drmota, kuipers} have an optimal transportation cost?  Do they have it uniformly in $N$?
\item How does one go about proving such results?
\item Does this perspective lead to new results?
\end{enumerate}
We emphasize that these types of problems, estimating transport cost from one measure to another, have been actively investigated in the field of Optimal Transport \cite{santa, villani}. Here, the emphasis is usually on existence and uniqueness of optimal transport maps as well as fine qualitative and quantitative properties. Many special cases have been actively investigated in probability theory, we emphasize 
the problems of estimating the transport of random points to the Lebesgue measure, more generally, random points drawn from a measure $\mu$ to $\mu$ or random points to random points
\cite{ot1, am, ot6, ot5, ot3, ot2, ot7, ot4}. As far as we know, special structures arising from Number Theory or Combinatorics have not been considered before (however, there are some interesting precursors in \cite{blum, louis, hensley, stein0, stein3, su, su1, su2}).

\subsection{Setup.}

We introduce the $p-$Wasserstein distance (the example above had $p=1$ and is also known as the `Earth Mover distance') between two measures
$\mu$ and $\nu$ as
$$ W_p(\mu, \nu) = \left( \inf_{\gamma \in \Gamma(\mu, \nu)} \int_{M \times M}{ |x-y|^p d \gamma(x,y)}\right)^{1/p},$$
where $| \cdot |$ is the usual distance on the torus and $\Gamma(\mu, \nu)$ denotes the collection of all measures on $M \times M$
with marginals $\mu$ and $\nu$, respectively (also called the set of all couplings of $\mu$ and $\nu$). 
Our two measures under consideration are 
$$ \mu = \frac{1}{N} \sum_{k=1}^{N}{\delta_{x_k}} \qquad \mbox{and} \qquad \nu = dx,$$
where $dx$ refers to the normalized volume measure. It is relatively easy to see that, we have an
(optimal) lower bound that is independent of the set $\left\{x_1, \dots, x_N\right\} \subset \mathbb{T}^d$
$$W_1\left( \frac{1}{N} \sum_{k=1}^{N}{\delta_{x_k}}, dx\right) \geq \frac{c_d}{N^{1/d}}.$$
The short argument is as follows: let $B_{\varepsilon N^{-1/d}}(x)$ be a ball of radius $\varepsilon N^{-1/d}$ centered
around $x$. We note that the total volume of $N$ such balls satisfies
$$ \left| \bigcup_{i=1}^{N}{ B_{\varepsilon N^{-1/d}}(x)} \right| \leq N \left| B_{\varepsilon N^{-1/d}}(x) \right| \leq \omega_d \varepsilon^d$$
for some universal constant $\omega_d$ depending only on the dimension. For $\varepsilon$ sufficiently small (depending only on $\omega_d$), this is much less than the
volume of $\mathbb{T}^d$ and therefore most of the Lebesgue measure on $\mathbb{T}^d$ is at distance $\gtrsim \varepsilon N^{-1/d}$ from
the set $\left\{x_1, \dots, x_N\right\}$. Here and in what follows, we use $A \lesssim B$ to denote the existence of a universal constant $c>0$ such that $A \leq cB$. 
We refer to Santamborigo \cite{santa} or Villani \cite{villani} for introductions to Optimal Transport and the Wasserstein distance.

\subsection{Existing Results in One Dimension.} There are several recent results in the one-dimensional setting. 
Given a finite set on the one-dimensional torus $\left\{x_1, \dots, x_N\right\} \subset \mathbb{T}$, we associate to it the measure 
$$ \mu = \frac{1}{N} \sum_{k=1}^{N}{\delta_{x_k}}.$$
A natural quantity that is frequently studied (see e.g. \cite{dick, drmota, kuipers}) is the discrepancy
$$ D_N(\mu) = \sup_{J \subset \mathbb{T} \atop J~\mbox{\tiny interval}}{ | \mu(J) - |J| |}.$$
It is easy to see that $N^{-1} \leq D_N \leq 1$. The inequality
$$ W_1(\mu, dx) \lesssim D_N(\mu)$$
follows from Monge-Kantorovich duality (this is carried out in greater detail in \cite{blum} or \cite{stein}). Another notion of regularity is Zinterhof's diaphony \cite{drmota, zinterhof}, which can be defined as
\begin{align*}
 F_N(\mu) = \left( \sum_{k \in \mathbb{Z} \atop k \neq 0}{ \frac{| \widehat{\mu}(k)|^2}{k^2}}\right)^{1/2}.
 \end{align*}
 One of the key points of our paper is that we are able to generalize Zinterhof's diaphony to higher dimensions.
A recent inequality of Peyre \cite{peyre} can be reinterpreted as saying (see \cite{stein0, stein3}) that
$$ W_2(\mu, dx) \lesssim F_N(\mu).$$
Summarizing, we have two inequalities and Holder's inequality
\begin{align*}
 W_1(\mu, dx) &\lesssim D_N(\mu)\\
    W_1(\mu, dx)  \leq W_2(\mu, dx) &\lesssim F_N(\mu) 
   \end{align*}
For classical one-dimensional constructions in Number Theory, the notions $D_N$ and $F_N$ have been studied intensively.
This connection immediately implies a series of results for the Wasserstein distance: the upper bounds that we obtain for
the $W_2$ distance are better, by a factor of $(\log{N})^{1/2}$, than the estimate on $D_N$. A simple example is given by
the van der Corput sequence in base $r \in \mathbb{N}$ (see e.g. \cite{dick}). The element $x_n$ is given by writing $n$ in base $r$, inverting
the digits at the comma and then reinterpreting this as a real number; the van der Corput sequence in base 2 starts with
$0.5, 0.25, 0.75, 0.125, 0.625$ and so on. It is known to satisfy $D_N \lesssim_r N^{-1} \log{N}$. Using an existing result
of Proinov \& Grozdanov \cite{proinov}, we can obtain the following improved estimate on the transport distance.

\begin{theorem}[Proinov \& Grozdanov \cite{proinov}] Let $(x_n)_{n=1}^{\infty}$ denote the van der Corput sequence in base $r$. Then, uniformly in $N$,
$$ W_2\left( \frac{1}{N} \sum_{k=1}^{N}{\delta_{x_k}}, dx \right) \lesssim_r \frac{(\log{N})^{1/2}}{N}.$$
\end{theorem}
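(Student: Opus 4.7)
The plan is a short chain of two already-available estimates. The introduction records Peyre's inequality
$$ W_2(\mu, dx) \lesssim F_N(\mu) $$
for any probability measure $\mu$ on $\mathbb{T}$, where $F_N$ is Zinterhof's diaphony. Once this is on the table, the only thing left is to bound $F_N$ for the van der Corput measure by $(\log N)^{1/2}/N$, and this is exactly the content of the Proinov--Grozdanov estimate on the diaphony of the van der Corput sequence. So my proof would consist of exactly two lines: cite Peyre to pass from $W_2$ to $F_N$, then cite Proinov--Grozdanov to bound $F_N$.

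If one wanted to reprove Proinov--Grozdanov rather than quote it, the strategy would be to exploit the $r$-adic structure of the sequence. Writing $n$ in base $r$ as $n = \sum_{j\ge 0} a_j(n) r^j$, one has $x_n = \sum_{j \ge 0} a_j(n) r^{-j-1}$, so the exponential sum
$$ \widehat{\mu}(k) = \frac{1}{N}\sum_{n=1}^N e^{-2\pi i k x_n} $$
factorizes over the digits of $k$ when $N$ is a power of $r$, and for general $N$ one decomposes the range $1,\dots,N$ into blocks on which this still applies. This yields sharp digit-by-digit bounds on $|\widehat{\mu}(k)|$, which one then feeds into the defining sum
$$ F_N(\mu)^2 = \sum_{k \neq 0} \frac{|\widehat{\mu}(k)|^2}{k^2} $$
and estimates by grouping $k$ according to the index of its highest nonzero $r$-adic digit; each such dyadic-style group contributes $O(N^{-2})$ and there are $O(\log N)$ groups, producing the claimed $(\log N)/N^2$ bound on $F_N^2$.

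The main obstacle, in the self-contained route, is the careful bookkeeping for $N$ that is not a power of $r$: the product factorization breaks down and one must split $N$ into its base-$r$ blocks, then reassemble the contributions without losing the $(\log N)^{1/2}$ factor. Since Proinov--Grozdanov already handle this, my actual proposal is simply to concatenate Peyre's inequality with their diaphony estimate, which suffices because both hold uniformly in $N$ (ensuring the conclusion holds for the sequence rather than only for certain distinguished cardinalities).
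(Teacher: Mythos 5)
Your two-line derivation---Peyre's inequality $W_2(\mu,dx)\lesssim F_N(\mu)$ followed by the Proinov--Grozdanov diaphony bound for the van der Corput sequence---is exactly how the paper obtains Theorem~1; no separate proof appears in the proofs section because the result is deduced in the introduction precisely this way. Your sketch of a self-contained digit-by-digit argument is a reasonable description of what Proinov--Grozdanov actually do, but it is not needed for the purposes of this paper.
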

A recent result of Graham \cite{graham} shows that this is the optimal rate: Graham showed that for any sequence
$(x_n)_{n=1}^{\infty}$ in $[0,1]$, we have, for some universal $c>0$,
$$ W_2\left( \frac{1}{N} \sum_{k=1}^{N}{\delta_{x_k}}, dx \right) \geq c \frac{(\log{N})^{1/2}}{N} \qquad \mbox{for infinitely many values of}~N.$$
The inequality $F_N \gtrsim W_2(\mu, dx)$, a result of Peyre \cite{peyre}, implies the same result for the Zinterhof diaphony which recovers a result of Proinov
 \cite{proinov0}. A natural question is whether this rate of growth is attained by other sequences as well.
The second author recently remarked \cite{stein0} that the $(n\alpha)-$sequence satisfies a similar growth. Moreover, quadratic residues of a finite field, suitably rescaled,
behave better than one would obtain using the Polya-Vinogradov estimate (see for example \cite{burgess}).
\begin{theorem}[$n\alpha$ Sequence and Quadratic Residues, \cite{stein0}] Let $\alpha$ be badly approximable and $x_n = \left\{n \alpha \right\}$, then
$$ W_2\left( \frac{1}{N} \sum_{k=1}^{N}{\delta_{x_k}}, dx \right) \lesssim_{\alpha} \frac{(\log{N})^{1/2}}{N}.$$
Moreover, let $p$ be a prime and let $x_k = \left\{ k^2/p \right\}$ for $ 1 \leq k \leq p$. Then
$$ W_2\left( \frac{1}{p} \sum_{k=1}^{p}{\delta_{x_k}}, dx \right) \lesssim \frac{1}{\sqrt{p}}.$$
\end{theorem}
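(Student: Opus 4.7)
The plan is to invoke Peyre's inequality $W_2(\mu, dx) \lesssim F_N(\mu)$ stated above, which reduces both claims to estimates on the diaphony $F_N(\mu) = (\sum_{k \neq 0} |\hat{\mu}(k)|^2/k^2)^{1/2}$. The benefit of this reduction is that in each case $\hat{\mu}(k)$ is a classical exponential sum for which sharp bounds are available off the shelf.

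For the Kronecker sequence, I would compute
$$\hat{\mu}(k) = \frac{1}{N}\sum_{n=1}^{N} e^{-2\pi i k n \alpha}$$
as a finite geometric series, yielding the standard bound $|\hat{\mu}(k)| \lesssim \min(1, 1/(N\|k\alpha\|))$, where $\|\cdot\|$ denotes distance to the nearest integer. Bad approximability of $\alpha$ gives $\|k\alpha\| \gtrsim_{\alpha} 1/|k|$, whence
$$F_N(\mu)^2 \lesssim \sum_{k \neq 0} \frac{1}{k^2}\min\!\left(1,\frac{1}{N^2 \|k\alpha\|^2}\right).$$
One then performs a dyadic decomposition of $k$ (or, more efficiently, a partition into blocks governed by the continued fraction denominators $(q_j)$ of $\alpha$, so that a Denjoy-Koksma / three-distance argument can be invoked block by block) to produce $F_N(\mu)^2 \lesssim_{\alpha} (\log N)/N^2$, yielding the claimed rate. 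I expect this to be the main obstacle: the bound $\|k\alpha\| \gtrsim_{\alpha} 1/|k|$ is tight only on the sparse subsequence $k = q_j$, so a crude summation loses a factor of $\log N$, and the bookkeeping across continued fraction scales has to be done with care.

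For the quadratic residues the situation is much cleaner: $\hat{\mu}(n) = (1/p)\sum_{k=1}^{p} e^{-2\pi i n k^2/p}$ is a Gauss sum, and the classical identity $|\sum_{k=0}^{p-1} e^{2\pi i n k^2/p}| = \sqrt{p}$ for $p \nmid n$ gives $|\hat{\mu}(n)| = 1/\sqrt{p}$, while $|\hat{\mu}(n)| = 1$ for $p \mid n$. Substituting and splitting the diaphony according to the divisibility of $n$ by $p$,
$$F_p(\mu)^2 \;=\; \sum_{\substack{n \neq 0 \\ p \mid n}} \frac{1}{n^2} \;+\; \frac{1}{p}\sum_{p \nmid n}\frac{1}{n^2} \;\lesssim\; \frac{1}{p^2} + \frac{1}{p} \;\lesssim\; \frac{1}{p},$$
and Peyre's inequality yields $W_2(\mu, dx) \lesssim 1/\sqrt{p}$, as claimed.
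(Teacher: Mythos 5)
The paper does not actually prove this theorem: it is stated as Theorem~2 with a citation to an earlier paper of Steinerberger (the reference \cite{stein0}), so there is no in-paper proof to compare against. That said, your proposed route is exactly the framework the paper describes in \S1.3 (Peyre's inequality reducing $W_2$ to diaphony, then classical exponential-sum estimates), and it is essentially the argument that \cite{stein0} carries out; it also runs parallel to the paper's own proof of Theorem~5, which is the higher-dimensional analogue.

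Your quadratic-residue computation is complete and correct as written: $|\widehat{\mu}(n)| = p^{-1/2}$ for $p \nmid n$ and $=1$ for $p \mid n$, and the diaphony sum splits exactly as you indicate to give $F_p(\mu)^2 \lesssim 1/p$. Nothing to add there.

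For the Kronecker part, the strategy is right, but one of your remarks undersells the gap you would need to close. You write that the naive bound $\|k\alpha\| \gtrsim_{\alpha} 1/|k|$, substituted blindly for every $k$, loses only a factor of $\log N$. In fact it loses far more: plugging $\|k\alpha\| \gtrsim 1/|k|$ into $\sum_{k\neq 0} k^{-2}\min(1, (N\|k\alpha\|)^{-2})$ gives $\sum_{|k|\leq N} N^{-2} + \sum_{|k|>N} k^{-2} \sim N^{-1}$, i.e.\ $W_2 \lesssim N^{-1/2}$, which is off by essentially a factor of $N^{1/2}$, not $(\log N)^{1/2}$. The crucial point is that $\|k\alpha\| \gtrsim 1/|k|$ is tight only for $k$ near continued-fraction denominators, and the real input is the separation property: within a dyadic block $2^{\ell}\leq |k| < 2^{\ell+1}$, any two $k_1 \neq k_2$ satisfy $\|k_1\alpha - k_2\alpha\| \gtrsim_\alpha 1/|k_1-k_2| \gtrsim 2^{-\ell}$, so the $\sim 2^{\ell}$ values $\|k\alpha\|$ are spread out at spacing $\gtrsim 2^{-\ell}$. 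Running that separation argument block by block (exactly as the paper does for Theorem~5 in $d\geq 2$) gives $\lesssim N^{-2}$ per dyadic block up to $|k|\sim N$ and geometrically decaying contributions beyond, summing to $(\log N)/N^2$. So: you correctly identified that a finer decomposition is needed, and you named the right mechanism (three-distance / Denjoy--Koksma, equivalently the separation estimate above), but the statement about what the crude bound loses is misleading, and the block-by-block estimate is the heart of the proof rather than a cleanup step, so it should be spelled out rather than waved at.
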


This connection between the Wasserstein distance, Diaphony, the Sobolev space $\dot H^{-1}$ and the corresponding exponential sum estimate does not seem
to have been noticed before the paper \cite{stein0}. For that reason, we believe that there are many interesting results in $d=1$ that are within reach.

\begin{figure}[h!]
\begin{center}
\begin{tikzpicture}[scale=3]
\draw [very thick] (0,0) -- (1,0) -- (1,1) -- (0,1) -- (0,0);
\filldraw (1/6, 1/6) circle (0.02cm);
\filldraw (3/6, 1/6) circle (0.02cm);
\filldraw (5/6, 1/6) circle (0.02cm);
\filldraw (1/6, 3/6) circle (0.02cm);
\filldraw (3/6, 3/6) circle (0.02cm);
\filldraw (5/6, 3/6) circle (0.02cm);
\filldraw (1/6, 5/6) circle (0.02cm);
\filldraw (3/6, 5/6) circle (0.02cm);
\filldraw (5/6, 5/6) circle (0.02cm);
\draw [thick] (1/3,0) -- (1/3,1);
\draw [thick] (2/3,0) -- (2/3,1);
\draw [thick] (0,1/3) -- (1,1/3);
\draw [thick] (0,2/3) -- (1,2/3);
\end{tikzpicture}
\end{center}
\caption{A distribution with small Wasserstein transportation cost -- however, these constructions are not uniform in $N$.}
\end{figure}
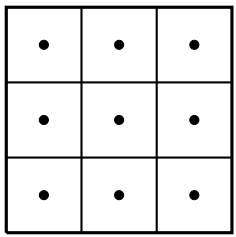

\subsection{Existing Results in Higher Dimensions}
It is easy to see that for any fixed set of points $\left\{x_1, \dots, x_N\right\} \subset \mathbb{T}^d$, the lattice construction (see Fig. 2) is
optimal up to constants. However, if one were to construct an infinite sequence $(x_n)_{n=1}^{\infty}$ with estimates that are uniformly
good, a lattice construction does not seem to be particularly useful; see Fig 2: where would one put the next point and the point after that?
A general result has recently been obtained by the authors  \cite{louis} on general compact manifolds. If $(M,g)$ is a compact manifold
without boundary and $G(\cdot, \cdot)$ denotes the Green's function of the Laplacian $-\Delta_g$, then the greedy construction
$$ x_n = \arg\min_{x \in M} \sum_{k=1}^{n-1}{G(x,x_k)}.$$
has good distribution properties.
\begin{theorem}[B. \& S. \cite{louis}] Let $x_n$ be a sequence obtained in this way on a compact $d-$dimensional manifold. Then 
$$ W_2\left( \frac{1}{N} \sum_{k=1}^{N} \delta_{x_k} , dx \right) \lesssim_M \begin{cases}N^{-1/2} (\log{N})^{1/2} \qquad &\mbox{if}~d=2 \\
N^{-1/d} &\mbox{if}~d \geq 3. \end{cases}$$
\end{theorem}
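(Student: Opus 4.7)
The plan is to combine Peyre's inequality with heat-kernel regularization, using the variational property of the greedy sequence to control the resulting Green's-function energy. Start from the triangle inequality
$$W_2(\mu, dx) \;\leq\; W_2(\mu, \mu_t) + W_2(\mu_t, dx),$$
where $\mu_t = e^{t\Delta_g}\mu$. Evolving each Dirac mass along the heat flow furnishes a natural coupling with $W_2(\mu, \mu_t) \lesssim \sqrt{t}$, and Peyre's inequality gives $W_2(\mu_t, dx) \lesssim \|\mu_t - dx\|_{\dot{H}^{-1}}$, a quantity that is now finite because $\mu_t$ is smooth.

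Next I would express this $\dot{H}^{-1}$ norm via the Green's function. Using $\int_M G(x,y)\,dy = 0$ on a closed manifold,
$$\|\mu_t - dx\|_{\dot{H}^{-1}}^2 \;=\; \frac{1}{N^2}\sum_{i,j=1}^N G_{2t}(x_i,x_j),$$
where $G_{2t}(x,y) = \sum_{k \geq 1} e^{-2\lambda_k t}\phi_k(x)\phi_k(y)/\lambda_k$ is the heat-regularized Green's function. By Weyl's law and standard heat-kernel asymptotics, the diagonal contribution $\tfrac{1}{N}G_{2t}(x,x)$ is of order $t^{1-d/2}/N$ for $d \geq 3$ and $(\log 1/t)/N$ for $d=2$. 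For the off-diagonal sum, the greedy construction is decisive: the variational property combined with $\int_M G(x,\cdot)\,dx = 0$ yields
$$\sum_{k=1}^{n-1} G(x_n, x_k) \;\leq\; 0,$$
so summing over $n$ gives $\sum_{i \neq j} G(x_i, x_j) \leq 0$. Writing $G - G_{2t} = \int_0^{2t}(p_s - 1/|M|)\,ds$ and invoking off-diagonal heat-kernel upper bounds transfers this to $\sum_{i \neq j} G_{2t}(x_i, x_j) \lesssim tN^2$.

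Combining these estimates yields
$$W_2(\mu, dx) \;\lesssim\; \sqrt{t} + \sqrt{\,t + \tfrac{1}{N} G_{2t}(x,x)\,},$$
which is controlled by $\sqrt{t} + t^{(2-d)/4}/\sqrt{N}$ for $d \geq 3$ and by $\sqrt{t} + \sqrt{(\log 1/t)/N}$ for $d=2$. Optimizing with $t = N^{-2/d}$ gives $W_2 \lesssim N^{-1/d}$ in dimensions $d \geq 3$, while $t = \log N / N$ gives $W_2 \lesssim (\log N / N)^{1/2}$ in dimension $d = 2$, matching the claim. The main technical obstacle is the rigorous transfer of the greedy bound from $G$ to the smoothed kernel $G_{2t}$ together with sharp pointwise control of the regularized diagonal across dimensions; both steps rely on manifold-level spectral and heat-kernel estimates rather than torus-specific Fourier tools, and the logarithmic loss in $d=2$ is an essential artifact of the borderline behavior of $G_{2t}(x,x)$.
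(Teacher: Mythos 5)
Your proposal is correct and follows essentially the same route the paper indicates for this result: heat-kernel smoothing to pay $\sqrt{t}$ in $W_2$, Peyre's inequality reducing to $\|\mu_t - dx\|_{\dot H^{-1}}$, the identity $\|\mu_t - dx\|_{\dot H^{-1}}^2 = N^{-2}\sum_{i,j} G_{2t}(x_i,x_j)$, the greedy variational property plus $\int_M G(x,\cdot)\,dx = 0$ to bound the off-diagonal Green energy by zero, and on-diagonal heat-kernel asymptotics for the remaining diagonal term. One small remark: the transfer $\sum_{i\neq j} G_{2t}(x_i,x_j) \lesssim tN^2$ does not actually require off-diagonal heat-kernel upper bounds as you suggest — writing $G_{2t} = G - \int_0^{2t}(p_s - 1)\,ds$, the term $-\sum_{i\neq j}\int_0^{2t} p_s(x_i,x_j)\,ds$ is already $\leq 0$ by nonnegativity of the heat kernel, so the bound $2tN(N-1)$ comes for free from the constant subtraction.
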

It is not clear whether the $(\log{N})^{1/2}$ factor is necessary. The crucial ingredient to these types of results is a smoothing procedure
introduced in \cite{stein0} coupled with Peyre's estimate \cite{peyre}. This allows us to obtain a general bound which is particularly simple
on $\mathbb{T}^d$ and reads as follows:
$$\boxed{ W_2\left( \frac{1}{N} \sum_{k=1}^{N} \delta_{x_k} , dx \right)^2 \lesssim_d \inf_{t > 0} \left[ t + \sum_{k \in \mathbb{Z}^d \atop k \neq 0} \frac{e^{-\|k\|^2 t}}{\|k\|^2} \left| \frac{1}{N} \sum_{n=1}^{N} e^{2\pi i \left\langle k, x_n \right\rangle} \right|^2 \right] }$$
This inequality has a series of remarkable features:
\begin{enumerate}
\item It is phrased exclusively in terms of exponential sums that have been well studied for a variety of sequences; in particular, information about the size of these exponential sums is available for many sequences.
\item The quantity on the right-hand side reduces to the notion of diaphony $F_N$ in the one-dimensional case $d=1$ and $t=0$.
\item However, in contrast to classical diaphony, the quantity is \textit{finite} for \textit{any} set of points and \textit{any} dimension $d \in \mathbb{N}$ for all $t > 0$. It can thus be regarded as 
a useful generalization of Zinterhof's diaphony.
\end{enumerate}
We note that diaphony has been studied in a variety of settings \cite{dia1, dia2, dia3, dia4, zinterhof, zinterhof2}. We interpret it, in one dimension, as the quantity
$$ F_N = \left\|  \frac{1}{N} \sum_{k=1}^{N} \delta_{x_k}  \right\|_{\dot H^{-1}},$$
where $\dot H^{-1}$ is the Sobolev space. We observe that this quantity becomes meaningless in dimensions $d \geq 2$ because Dirac deltas are no longer contained in the Sobolev
space $\dot H^{-1}$ (or, put differently, the infinite sums do not converge). This has been a persistent issue in trying to define notions of discrepancy in higher dimensions on other geometries (see e.g. 
Freeden \cite{freeden} or Grabner, Klinger \& Tichy \cite{grabner}). In contrast, we can rewrite our inequality (even on general manifolds) as 
$$W_2\left( \frac{1}{N} \sum_{k=1}^{N} \delta_{x_k} , dx \right) \lesssim_M \inf_{t > 0} \left[ \sqrt{t} +  \left\| e^{t\Delta}  \frac{1}{N} \sum_{k=1}^{N} \delta_{x_k}  \right\|_{\dot H^{-1}}\right],$$
where $e^{t\Delta}$ is the heat propagator, i.e. the forward evolution of the heat equation. This quantity is always finite for any $t > 0$. We believe this to be an insight that might be useful
in discrepancy theory as a suitable generalization of diaphony to higher dimensions. We also note that this notion is intimately tied to the integration error for Lipschitz functions, see \S 2.3. below.

\section{Main Results}
\subsection{A Random Walk.}
We have already mentioned a series of results for $d=1$. We add another one to the list: here,  we do not consider a sequence of points but a sequence of probability measures. Let $\mu_k$ be the measure that arises from an unbiased random walk on $\mathbb{T} \cong [0,1]$ where each step is $\pm \alpha$ (independently and with likelihood $1/2$ each) and $\alpha$ is a quadratic irrational. This model was studied by Su \cite{su} (see also Hensley \& Su \cite{hensley} and Su \cite{su2}). The main result in \cite{su} showed that the measure arising after $k$ random steps satisfies
$$ D_N(\mu_k) \lesssim_{\alpha} k^{-1/2}.$$
We note that this result immediately implies $W_1(\mu_k, dx) \lesssim k^{-1/2}$. Here, we show that for this model we can obtain a (worse) bound for the (larger) $W_2-$distance.
\begin{theorem} We have
$$ W_2(\mu_k, dx) \lesssim_{\alpha} k^{-1/4}.$$
\end{theorem}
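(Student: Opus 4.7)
The plan is to apply the boxed heat-regularized form of Peyre's inequality (which is valid for any probability measure on $\mathbb{T}$, not just empirical measures, since $\frac{1}{N}\sum e^{2\pi i \langle k, x_n\rangle}$ is simply a Fourier coefficient) and control the resulting sum using the Diophantine properties of a quadratic irrational.

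\textbf{Step 1: Fourier coefficients of $\mu_k$.} Write $X_k = \sum_{j=1}^{k} \epsilon_j \alpha$ where $\epsilon_j \in \{\pm 1\}$ are i.i.d.\ symmetric. Independence gives
$$\widehat{\mu_k}(n) \;=\; \mathbb{E}\bigl[e^{-2\pi i n X_k}\bigr] \;=\; \prod_{j=1}^k \mathbb{E}\bigl[e^{-2\pi i n \epsilon_j \alpha}\bigr] \;=\; \cos^{k}(2\pi n \alpha).$$

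\textbf{Step 2: Diophantine bound.} Since $\alpha$ is a quadratic irrational it has bounded partial quotients, so there is $c_\alpha > 0$ with $\|m\alpha\|_{\mathbb{T}} \geq c_\alpha / m$ for all integers $m \geq 1$. Applied with $m = 2n$, and using $|\sin(2\pi n \alpha)| \asymp \|2n\alpha\|_{\mathbb{T}}$,
$$\cos^{2}(2\pi n\alpha) \;=\; 1 - \sin^{2}(2\pi n\alpha) \;\leq\; 1 - \frac{c_\alpha'}{n^2},$$
hence $|\widehat{\mu_k}(n)|^{2} = \cos^{2k}(2\pi n \alpha) \leq \exp(-c_\alpha' k / n^2)$.

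\textbf{Step 3: Apply the boxed inequality.} Taking $t = 0$ (the sum converges by Step 2, so the heat regularization is unnecessary here) gives
$$W_2(\mu_k, dx)^2 \;\lesssim\; \sum_{n \neq 0} \frac{|\widehat{\mu_k}(n)|^2}{n^2} \;\lesssim\; \sum_{n \geq 1} \frac{e^{-c_\alpha' k/n^2}}{n^2}.$$
Split at $n \sim \sqrt{k}$: for $n \geq \sqrt{k}$ simply use $e^{-c_\alpha' k/n^2} \leq 1$ and $\sum_{n \geq \sqrt{k}} n^{-2} \lesssim k^{-1/2}$; for $n < \sqrt{k}$, substitute $u = n/\sqrt{k}$ and compare to $\frac{1}{\sqrt{k}} \int_0^\infty u^{-2} e^{-c_\alpha'/u^2} \, du$, a finite integral. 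Both pieces are $\lesssim_\alpha k^{-1/2}$, so $W_2(\mu_k, dx)^2 \lesssim_\alpha k^{-1/2}$, giving the claim.

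\textbf{Main obstacle.} The only subtle point is arranging the Diophantine inequality so that the near-$\pm 1$ behaviour of $\cos(2\pi n\alpha)$ is controlled: because $\cos(2\pi n\alpha) = \pm 1$ corresponds to $2n\alpha$ being close to an integer, one must apply the badly-approximable bound at $2n$ rather than $n$ — this is where it matters that quadratic irrationals give a \emph{uniform} constant $c_\alpha$ in $\|m\alpha\|_{\mathbb{T}} \geq c_\alpha/m$. Everything else is a routine Fourier computation and a dyadic split.
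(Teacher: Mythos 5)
Your proposal is correct, and the strategy coincides with the paper's up through Step 1 (computing $\widehat{\mu_k}(n)=\cos^k(2\pi n\alpha)$) and the reduction via Peyre's inequality to bounding $\sum_{n\neq 0}|\cos(2\pi n\alpha)|^{2k}n^{-2}$. Where you diverge is in how the Diophantine hypothesis is used to control that sum, and your route is genuinely different and arguably cleaner.

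The paper partitions $\mathbb{Z}\setminus\{0\}$ into $k$ classes $I_j=\{\ell:\, j/k\leq\{\ell\alpha\}\leq (j+1)/k\}$, uses bad approximability to show that members of a single class are $\gtrsim_\alpha k$-separated (so that $\sum_{\ell\in I_j}\ell^{-2}\lesssim (j+1)^2/k^2$), bounds $\max_{I_j}|\cos|^{2k}$ by $(1-(\min\{j,k-j\}/k)^2)^{2k}$, and then sums a Riemann sum that matches $k\int_0^1(1-x^2)^{2k}x^2\,dx\lesssim k^{-1/2}$. You instead use the Diophantine inequality \emph{pointwise}: $\|2n\alpha\|\geq c_\alpha/(2n)$ and $|\sin(2\pi n\alpha)|\gtrsim\|2n\alpha\|$ give the uniform Fourier decay $|\widehat{\mu_k}(n)|^2\leq e^{-c_\alpha' k/n^2}$, and then a single split at $n\sim\sqrt{k}$ closes the estimate. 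The pointwise bound absorbs into a single inequality what the paper accomplishes through the class decomposition (the class of $n$'s with $\{n\alpha\}$ near $0$ or $1/2$, where $|\cos|$ is near $1$, is precisely where $\|2n\alpha\|$ is small, hence where $n$ is forced to be large so that $1/n^2$ saves you). Your attention to taking $m=2n$ rather than $m=n$ in the Diophantine bound, so as to also cover $\cos\approx -1$, is exactly the right point of care; the paper's $I_j$ partition handles this implicitly through the $\min\{j,k-j\}$ symmetrization. Both approaches yield $k^{-1/4}$ for $W_2$, both exploit the same underlying arithmetic input, and both skip the heat regularization (take $t=0$) since the resulting Fourier sum already converges in one dimension.

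One small polish: in Step 3 the sum $\sum_{n<\sqrt{k}} n^{-2}e^{-c_\alpha'k/n^2}$ is a Riemann sum of a non-monotone function (it increases up to $n\sim\sqrt{c_\alpha'k}$ and decreases afterwards), so the comparison to the integral should either split at the maximum or simply add the single largest term $g_{\max}\sim 1/k$, which is harmless. This is a routine remark, not a gap.
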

We emphasize that the framework discussed in this paper enables us to reduce Theorem 4 to standard estimates. Theorem 4 is presumably not optimal and stronger results should be true.
Hensley \& Hu \cite{hensley} discuss their result and put it in direct relation to the Wasserstein distance. We hope that our approach will be a useful technique for these types of problems.


\subsection{Kronecker sequences.} Kronecker sequences are the natural generalization of $\left\{n \alpha\right\}$ sequences (irrational rotations) on $\mathbb{T}$. We say that
a vector $(\alpha_1, \alpha_2, \dots, \alpha_d) \in \mathbb{R}^d$ is badly approximable if, for all positive integers $q \neq 0$, we have
$$ \max_{1 \leq j \leq d}{ \| \alpha_j q\|} \geq \frac{c_{\alpha}}{q^{1/d}},$$
where $\| \cdot \|$ is the distance to the nearest integer.
 By Dirichlet's approximation theorem, this is the optimal scaling. The existence of such a vector follows from continued fraction expansion
when $d=1$. The first examples in higher dimensions are due to Perron \cite{perron}, Davenport \cite{davenport} showed that there are uncountably many such vectors for $d=2$ and Schmidt \cite{sch}
extended this result to $d \geq 3$. The Kronecker sequence is then defined via 
$$ x_n = (n \alpha_1, n \alpha_2, \dots, n \alpha_d) \quad \mbox{mod}~1,$$
where $\mbox{mod}~1$ is to be interpreted component-wise. We now establish that these sequences have uniformly good transport properties to the uniform measure.
\begin{theorem} Let $d \geq 2$ and let $\alpha \in \mathbb{R}^d$ be badly approximable. Then the Kronecker sequence satisfies
$$ W_2\left(\frac{1}{N} \sum_{k=1}^{N}{\delta_{x_k}}, dx\right) \lesssim_{c_{\alpha}, d} N^{-1/d}
$$
\end{theorem}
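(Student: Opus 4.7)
The plan is to apply the boxed inequality from Section~1.5 directly to the Kronecker sequence and estimate the resulting weighted sum of exponential sums using Diophantine properties of $\alpha$. Since $x_n = n\alpha$ on $\mathbb{T}^d$, the exponential sum is a geometric series, and the standard estimate gives
$$|S_N(k)|^2 := \left|\frac{1}{N}\sum_{n=1}^N e^{2\pi i n\langle k,\alpha\rangle}\right|^2 \leq \min\left(1, \frac{1}{N^2 \|\langle k, \alpha\rangle\|^2}\right),$$
where $\|\cdot\|$ denotes distance to the nearest integer. I would then invoke Khintchine's transference principle: the ``simultaneous'' badly approximable hypothesis $\max_j \|q\alpha_j\| \geq c_\alpha q^{-1/d}$ is equivalent, up to constants, to the ``dual linear form'' lower bound
$$\|\langle k, \alpha\rangle\| \geq c'_\alpha \|k\|_\infty^{-d} \qquad \text{for all } k \in \mathbb{Z}^d \setminus \{0\}.$$
With these two ingredients and the natural scale $t = N^{-2/d}$ in the boxed inequality, the problem reduces to bounding
$$\frac{1}{N^2} \sum_{k \neq 0} \frac{e^{-\|k\|^2 t}}{\|k\|^2 \|\langle k, \alpha\rangle\|^2} \lesssim N^{-2/d}.$$

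The technical heart is a dyadic counting estimate: for dyadic parameters $R, M$,
$$\#\{k \in \mathbb{Z}^d : \|k\|_\infty \sim R,\; \|\langle k, \alpha\rangle\| \sim 1/M\} \lesssim 1 + R^d/M.$$
Geometrically, the $\langle \cdot, \alpha\rangle$-constraints define $O(R\|\alpha\|)$ parallel slabs of thickness $\sim 1/M$ in $\mathbb{R}^d$, which intersect the box $[-R,R]^d$ in total volume $\sim R^d/M$; Khintchine's transference additionally forbids $M$ from exceeding a constant multiple of $R^d$. Weighting by $M^2$ and summing over dyadic $M \lesssim R^d$ then yields $\sum_{\|k\|_\infty \sim R} \|\langle k, \alpha\rangle\|^{-2} \lesssim R^{2d}$.

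Finally, the Gaussian weight $e^{-\|k\|^2 t}$ cuts off the outer sum at $\|k\| \sim t^{-1/2} = N^{1/d}$, so summing dyadically in $R$ gives
$$\frac{1}{N^2} \sum_R e^{-R^2 t} R^{2d-2} \lesssim \frac{t^{-(d-1)}}{N^2} = N^{-2/d},$$
which exactly matches the explicit $t = N^{-2/d}$ contribution and produces $W_2^2 \lesssim N^{-2/d}$. I expect the main obstacle to be rigorously justifying the counting lemma in the near-boundary regime $M \sim R^d$ and cleanly applying Khintchine transference with the correct dependence on $c_\alpha$; the hypothesis $d \geq 2$ enters through the balance of the $R$-sum $\sum_R R^{2d-2}$ against the $t$-contribution (the case $d=1$ would demand a logarithmic correction, consistent with the sharper $N^{-1}(\log N)^{1/2}$ rate from Theorem~3).
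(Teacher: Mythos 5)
Your overall plan --- the geometric-series bound $|S_N(k)| \lesssim N^{-1}\|\langle k,\alpha\rangle\|^{-1}$, Khintchine transference to get $\|\langle k,\alpha\rangle\| \gtrsim_\alpha \|k\|^{-d}$, dyadic decomposition in $\|k\|$, the shell estimate $\sum_{\|k\|\sim R}\|\langle k,\alpha\rangle\|^{-2} \lesssim R^{2d}$, and the choice $t = N^{-2/d}$ --- is exactly the paper's proof. The final arithmetic $t^{-(d-1)}/N^2 = N^{-2/d}$ and the remark about the role of $d\geq 2$ are also correct.

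The one place where your write-up is not yet rigorous is the justification of the counting lemma
$$\#\{k : \|k\|_\infty\sim R,\ \|\langle k,\alpha\rangle\|\sim 1/M\} \lesssim 1 + R^d/M$$
by the volume of the slab--box intersection. Volume alone does not bound lattice-point counts in thin slabs: a slab of width $1/M$ may contain a rational hyperplane and hence $\sim R^{d-1}$ lattice points regardless of how small its volume is. What actually makes the bound true here is the separation you get from the very transference inequality you have already introduced: if $k_1 \neq k_2$ both lie in the shell and both satisfy $\|\langle k_i,\alpha\rangle\| \sim 1/M$, then $\|\langle k_1 - k_2,\alpha\rangle\| \lesssim 1/M$, and transference forces $\|k_1-k_2\| \gtrsim (c_\alpha M)^{1/d}$; hence the relevant lattice points are $M^{1/d}$-separated inside a box of side $\sim R$, giving the count $O(R^d/M)$. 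This is precisely how the paper argues, though packaged slightly differently: rather than stratifying by $M$, it notes that for $k$ in the dyadic shell the fractional parts $\{\langle k,\alpha\rangle\}$ are pairwise $\gtrsim R^{-d}$-separated on the circle and each is $\gtrsim R^{-d}$ from $0$, so $\sum_k \|\langle k,\alpha\rangle\|^{-2} \lesssim \sum_{h\geq 1}(hR^{-d})^{-2} \lesssim R^{2d}$ directly. Substituting this separation argument for the volume heuristic closes the gap, and then your proof is complete and coincides with the paper's.
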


We emphasize that this result is best possible (up to constants) as well as \textit{uniform} in $N$. It is not at all clear to us whether the conditon of $\alpha$ being badly approximable is necessary; however, in
light of results in $d=1$, this is quite conceivable. 

\subsection{Integration.}
Let us consider the problem of numerically integrating a function $f:\mathbb{T}^d \rightarrow \mathbb{R}$ which we assume to be Lipschitz. It is a classic 1959 result of Bakhvalov \cite{bak} (see also Novak \cite{novak}) that
there are points $(x_k)_{k=1}^{N}$ such that for all differentiable functions $f:\mathbb{T}^d \rightarrow \mathbb{R}$ with Lipschitz constant $\|\nabla f\|_{L^{\infty}}$
 $$ \left|  \int_{\mathbb{T}^d} f(x) dx - \frac{1}{N} \sum_{k=1}^{N} f(x_k) \right| \leq c_d \| \nabla f\|_{L^{\infty}} N^{-1/d}$$
and that this result is optimal in the power of $N$ and its dependance on the Lipschitz constant $\| \nabla f\|_{L^{\infty}}$: there
are functions $f$ for which the error is at that scale (up to constants).  Recently, Hinrichs, Novak, Ullrich and Wozniakowski \cite{hinrichs} (see also
\cite{hinrichs0, hinrichs00}) established rather precise estimates on the constant $c_d$ and showed that product rules (regular grid structures) are a good choice whenever the number of points
$N$ is of the form $N = m^d$. We improve this result in the following manner.

\begin{theorem}  Let $d \geq 2$ and let $\alpha \in \mathbb{R}^d$ be a badly approximable vector. Then, for some universal $c_{\alpha} > 0$ and all differentiable $f: \mathbb{T}^d \rightarrow \mathbb{R}$
$$ \left|  \int_{\mathbb{T}^d} f(x) dx - \frac{1}{N} \sum_{k=1}^{N} f(k \alpha) \right| \leq c_{\alpha} \| \nabla f\|^{(d-1)/d}_{L^{\infty}(\mathbb{T}^d)}\| \nabla f\|^{1/d}_{L^{2}(\mathbb{T}^d)}    N^{-1/d}.$$
\end{theorem}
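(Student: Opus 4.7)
The plan is to derive Theorem 6 from Theorem 5 by proving a general interpolation inequality of the form $|E| \lesssim W_2(\mu, dx) \cdot \|\nabla f\|^{(d-1)/d}_{L^\infty} \|\nabla f\|^{1/d}_{L^2}$, where $E = \int f \, dx - \frac{1}{N}\sum f(k\alpha)$ is the integration error, and then invoking the Wasserstein estimate already established for Kronecker sequences in Theorem 5. The trivial Bakhvalov-type bound $|E| \leq \|\nabla f\|_{L^\infty} W_1(\mu, dx) \leq \|\nabla f\|_{L^\infty} W_2(\mu, dx)$ follows by Kantorovich-Rubinstein duality; the improvement amounts to replacing one factor of $\|\nabla f\|_{L^\infty}$ by $(\|\nabla f\|_{L^2}/\|\nabla f\|_{L^\infty})^{1/d}$. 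The gain will come from averaging $|\nabla f|$ along displacement geodesics rather than using a uniform supremum.

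Set $A = \|\nabla f\|_{L^\infty}$ and $B = \|\nabla f\|_{L^2}$, and write $\mu = \frac{1}{N}\sum_{k=1}^N \delta_{k\alpha}$. Let $\pi$ be an optimal $W_2$-coupling of $(dx, \mu)$ on $\mathbb{T}^d$. Brenier-McCann's theorem (applied to a smooth source and a discrete target) yields $\pi = (\mathrm{id}, T)_\# dx$, and the preimages $V_k := T^{-1}(x_k)$ are (generalized) convex Laguerre cells of $dx$-measure $1/N$. For $y \in V_k$, write $\gamma_s(y) = (1-s) y + s \, x_k$ for the constant-speed minimizing geodesic on the torus. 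The fundamental theorem of calculus gives $E = \int_{\mathbb{T}^d}(f(T(y)) - f(y)) \, dy = \int_{\mathbb{T}^d} (T(y) - y) \cdot \int_0^1 \nabla f(\gamma_s(y)) \, ds \, dy$, and Cauchy-Schwarz in $y$ followed by Cauchy-Schwarz in $s$ and Fubini lead to
$$|E| \leq W_2(\mu, dx) \cdot \left( \int_0^1 \int_{\mathbb{T}^d} |\nabla f(\gamma_s(y))|^2 \, dy \, ds \right)^{1/2}.$$

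The inner integral is $\int |\nabla f|^2 \, d\rho_s$, where $\rho_s := (\gamma_s)_\# dx$ denotes the displacement interpolation. Because $\gamma_s$ restricted to each convex cell $V_k$ is an affine contraction with Jacobian determinant $(1-s)^d$, and the images $\gamma_s(V_k)$ remain pairwise disjoint for $s < 1$, the density of $\rho_s$ with respect to $dx$ is bounded by $(1-s)^{-d}$. Hence $\int |\nabla f|^2 \, d\rho_s \leq \min(A^2, (1-s)^{-d} B^2)$; the transition $A^2 = (1-s)^{-d}B^2$ occurs at $1-s = (B/A)^{2/d}$, and a short computation gives $\int_0^1 \min(A^2, (1-s)^{-d}B^2) \, ds \lesssim_d A^{2(d-1)/d} B^{2/d}$. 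Combining with the previous display produces $|E| \lesssim_d W_2(\mu, dx) \cdot A^{(d-1)/d} B^{1/d}$, and Theorem 5 gives $W_2(\mu, dx) \lesssim_{c_\alpha, d} N^{-1/d}$, finishing the proof. The delicate point is justifying the density estimate $(1-s)^{-d}$ for $\rho_s$ on the flat torus: it hinges on the convexity of the Laguerre cells for transport to a discrete target, together with the no-crossing property of optimal geodesics. Writing this carefully requires either lifting to $\mathbb{R}^d$ or invoking the intrinsic optimal transport theory on compact manifolds, but no genuinely new number-theoretic input beyond Theorem 5 is needed.
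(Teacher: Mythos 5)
Your argument is correct in spirit and takes a genuinely different route from the paper. The paper derives Theorem 6 from Theorem 8, whose proof splits the error using heat-flow mollification: replace $\mu$ by $e^{t\Delta}\mu$, bound the replacement cost by $\sqrt{t}\,\|\nabla f\|_{L^\infty}$ via the heat kernel moment estimate, bound $\langle f,\,e^{t\Delta}\mu-1\rangle$ by $\|\nabla f\|_{L^2}\,\|e^{t\Delta}\mu-1\|_{\dot H^{-1}}$ via spectral duality, plug in the exponential-sum estimate for the $\dot H^{-1}$ norm already obtained in the proof of Theorem~5, and then optimize in $t$. You instead prove the sharper and cleaner abstract inequality
$$\left|\int_{\mathbb{T}^d} f\,dx - \int_{\mathbb{T}^d} f\,d\mu\right| \lesssim_d W_2(\mu, dx)\,\|\nabla f\|_{L^\infty}^{(d-1)/d}\,\|\nabla f\|_{L^2}^{1/d}, \qquad d\geq 2,$$
for an arbitrary probability measure $\mu$, by writing the error along McCann's displacement interpolation, applying Cauchy--Schwarz twice, and using the density bound $\rho_s \le (1-s)^{-d}\,dx$ coming from the $(1-s)$-contraction on Laguerre cells and the no-crossing property of geodesics (equivalently, from the Jacobian estimate in the Cordero-Erausquin--McCann--Schmuckenschl\"ager interpolation inequality, specialized to the Ricci-flat torus). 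Your approach has the advantage that Theorem 5 enters purely as a black box through the single number $W_2(\mu,dx)$, rather than through the intermediate time-dependent bound $\|e^{t\Delta}\mu-1\|_{\dot H^{-1}}\lesssim N^{-1}t^{-(d-1)/2}$, so it immediately transfers to \emph{any} sequence with optimal $W_2$ behavior, and would extend to manifolds with nonnegative Ricci curvature. The paper's heat-flow route avoids the structure theory of optimal maps and therefore transfers more readily to manifolds where one has Green's function or heat kernel estimates but no tractable optimal transport structure. Two minor points: the sign of your intermediate identity for $E$ is flipped (it should be $-E$), which is harmless; and the elementary integral $\int_0^1 \min(A^2,(1-s)^{-d}B^2)\,ds \le \frac{d}{d-1}A^{2(d-1)/d}B^{2/d}$ uses $d\ge 2$ (for $d=1$ a logarithm appears), which is consistent with the hypothesis of the theorem. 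As you note, the remaining work is to carefully justify the Jacobian/no-crossing estimate on the flat torus, which is standard but should be cited or proved.
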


The main novelties are that:
\begin{enumerate}
\item The results holds uniformly in $N$ along a sequence.
\item The error estimate is actually smaller than the classically assumed dependence on the Lipschitz constant. We note that, trivially
$$\| \nabla f\|^{}_{L^{2}} \leq \| \nabla f\|^{}_{L^{\infty}}$$
which recovers the traditional estimate. At first, this seems like a contradiction to the fact that the dependence on the Lipschitz constant is optimal -- however,
it merely implies that extremal functions for the estimate have to have $\| \nabla f\|^{}_{L^{2}} \sim \| \nabla f\|^{}_{L^{\infty}}$ which is perhaps not surprising (one would
expect them to grow at maximal speed away from the points, so $|\nabla f|$ should be fairly constant). 
\item The result is an explicit improvement in the case where the function $f$ has a large derivative in a small region.
\end{enumerate}

We also emphasize that there is nothing particularly special about the Kronecker sequence: \textit{any} sequence for which we can establish optimal Wasserstein bounds
along the lines outlined above, we will also obtain a version of the integration result; the proof is identical (see \S 3.6. below). Indeed, the result is actually true on general $d-$dimensional manifolds, we refer to Theorem 8 below.
 We also note that the result has similar flavor and scaling as a result that was previously obtained by the second author \cite{stein5} in a different context. If $\alpha \in \mathbb{R}^d$
 is badly approximable, then it is possible to obtain directional Poincar\'{e} inequalities without loss on $\mathbb{T}^d$: for all $f \in C^{\infty}(\mathbb{T}^d)$ with mean value 0, we have
 $$ \| \nabla f\|_{L^2}^{(d-1)/d} \| \left\langle \nabla f, \alpha \right\rangle \|_{L^2}^{1/d} \geq c_{\alpha} \|f\|_{L^2}.$$

\subsection{The Case of the Regular Grid} Let us return to the case of the regular grid (refering to $\mathbb{T}^d$ or $[0,1]^d$ and having $N = m^d$ points that are arranged
as a regular grid). Since we have just improved the classic integration error for the Kronecker sequence, we would expect a similar improvement to hold for the regular grid (which is
well understood to be, in a sense, an optimal set for sampling Lipschitz functions). The classic estimate for a regular grid $(x_n)_{n=1}^{N}$ is
$$ \left|  \int_{[0,1]^d} f(x) dx - \frac{1}{N} \sum_{k=1}^{N} f(x_k) \right| \leq c_d \| \nabla f\|_{L^{\infty}}    N^{-1/d}.$$
Sukharev \cite{suk} (see also \cite{novak2}) has determined the sharp constant.
It is known that `the result cannot be significantly improved for uniformly continuous functions' (Dick \& Pillichshammer \cite[\S 1.3]{dick}). Indeed, there is a corresponding
result of Larcher (unpublished, but see \cite[\S 1.3]{dick}) that shows that the estimate is optimal with regards to modulus of continuity. However, there is an explicit improvement
in terms of $L^p-$spaces that seems to be new.

\begin{theorem} We have, for some explicit constant $c_d$ depending only on the dimension, for all differentiable $f:[0,1]^d \rightarrow \mathbb{R}$ sampled on the regular grid $(x_k)_{k=1}^{N}$ 
$$ \left|  \int_{[0,1]^d} f(x) dx - \frac{1}{N} \sum_{k=1}^{N} f(x_k) \right| \leq c_d \| \nabla f\|^{(d-1)/d}_{L^{\infty}(\mathbb{T}^d)}\| \nabla f\|^{1/d}_{L^{1}(\mathbb{T}^d)}    N^{-1/d}.$$
\end{theorem}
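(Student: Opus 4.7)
The plan is to bypass the Wasserstein machinery entirely and argue directly by a cube-by-cube Taylor-type estimate, interpolating between $\|\nabla f\|_{L^\infty}$ and $\|\nabla f\|_{L^1}$ via a change of variables.

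First I would partition $[0,1]^d$ into $N = m^d$ congruent axis-aligned cubes $Q_k$ of side $h = N^{-1/d}$ so that the grid point $x_k$ is the center of $Q_k$. Since $|Q_k| = 1/N$, the integration error splits cleanly as $\sum_k \int_{Q_k}(f(x)-f(x_k))\,dx$. On each cube the fundamental theorem of calculus gives $f(x)-f(x_k) = \int_0^1 \nabla f(x_k + t(x - x_k))\cdot(x - x_k)\,dt$. Using $|x - x_k| \leq \tfrac{\sqrt d}{2}h$, Fubini, and the change of variables $y = x_k + t(x - x_k)$ (which maps $Q_k$ onto the cube $Q_k^{(t)}$ of side $th$ centered at $x_k$ with Jacobian $t^{-d}$), the per-cube error is dominated by a dimensional constant times
\[
h \int_0^1 t^{-d} \int_{Q_k^{(t)}} |\nabla f(y)|\,dy\,dt.
\]

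The key step is to split the $t$-integral at a threshold $T \in (0,1]$: on $(0,T)$ use $|\nabla f| \leq \|\nabla f\|_{L^\infty}$ together with $|Q_k^{(t)}| = t^d|Q_k|$, which makes the integrand collapse to $\|\nabla f\|_{L^\infty}|Q_k|$; on $(T,1)$ enlarge $Q_k^{(t)} \subset Q_k$ and integrate $t^{-d}$, which for $d \geq 2$ produces a factor $T^{1-d}/(d-1)$. Summing over $k$, the local contributions telescope to $\|\nabla f\|_{L^\infty}T$ and $\|\nabla f\|_{L^1}T^{1-d}/(d-1)$ respectively. Optimizing in $T$, the choice $T = (\|\nabla f\|_{L^1}/\|\nabla f\|_{L^\infty})^{1/d}$ (which lies in $(0,1]$ because $\|\nabla f\|_{L^1} \leq \|\nabla f\|_{L^\infty}$ on the unit cube) balances the two terms and delivers the claimed bound $c_d N^{-1/d}\|\nabla f\|_{L^\infty}^{(d-1)/d}\|\nabla f\|_{L^1}^{1/d}$.

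I do not expect a serious obstacle. The one pitfall to avoid is trying to localize the $L^\infty$ factor to $\|\nabla f\|_{L^\infty(Q_k)}$ cube by cube: applying Hölder on the resulting sum reintroduces a spurious factor of order $N^{(d-2)/d}$, so the $L^\infty$ norm must stay global inside the local estimate. The hypothesis $d \geq 2$ enters only through the integrability of $t^{1-d}$ on $(T,1)$, which matches the expected behavior of the bound; and since the grid is centered and tiles $[0,1]^d$ exactly, no boundary correction is needed.
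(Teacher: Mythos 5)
Your proof is correct, and it is genuinely different from (and in fact more elementary than) the paper's argument. The paper isolates a local Poincar\'e-type inequality (Lemma~2): for $f$ vanishing at the cube center, $\left|\int_{[0,1]^d} f\right| \lesssim \|\nabla f\|_{L^\infty}^{(d-1)/d}\|\nabla f\|_{L^1}^{1/d}$, which it proves via the Morrey-type bound $\left|\int f\right| \lesssim \int |\nabla f|\,|x|^{1-d}\,dx$, O'Neil's H\"older inequality in Lorentz spaces (using $|x|^{1-d}\in L^{d/(d-1),\infty}$), and a layer-cake computation of $\|f\|_{L^{d,1}}$. It then rescales this lemma to each fundamental cell, takes $\|\nabla f\|_{L^\infty(B)}\le\|\nabla f\|_{L^\infty}$ globally, and applies H\"older with exponents $(d,d/(d-1))$ to the sum $\sum_B\|\nabla f\|_{L^1(B)}^{1/d}$. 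You instead proceed directly from the fundamental theorem of calculus on each cell, using the change of variables $y=x_k+t(x-x_k)$ to rewrite the per-cube error as $h\int_0^1 t^{-d}\int_{Q_k^{(t)}}|\nabla f|\,dy\,dt$, and you realize the $L^\infty$--$L^1$ interpolation by splitting the $t$-integral at a global threshold $T$ and optimizing. This is self-contained (no Lorentz spaces) and the optimization in $T$ plays exactly the role that the Lorentz-space H\"older plus the final sum-level H\"older play in the paper; in fact, switching the order of integration in your per-cube formula recovers the Morrey-type kernel $\|y-x_k\|^{1-d}$, so your argument quietly reproves the first step of Lemma~2 as well. Your computation also delivers a clean explicit constant, $c_d=\tfrac{\sqrt d}{2}\cdot\tfrac{d}{d-1}$. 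The only caveat is the one you already flag: your $(T,1)$-piece uses integrability of $t^{-d}$ and hence assumes $d\ge2$; for $d=1$ the statement is trivial by the one-dimensional mean-value estimate, so nothing is lost.
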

We observe that this is a slightly better estimate than Theorem 6 (an $L^1(\mathbb{T}^d)$ norm instead of the larger $L^2(\mathbb{T}^d)$ norm); this is maybe to be expected since one would assume that stronger estimates become available for the regular grid. We will also show that this is the best possible bound 
in terms of these $L^p$-spaces. It is an interesting question whether this bound ($L^1$ instead of $L^2$) is also true for the Kronecker sequence (Theorem 6). More generally, one could ask whether there
is a sequence $(x_n)_{n=1}^{\infty}$ that uniformly attains the same error estimate as Theorem 7.

\subsection{Other manifolds.} Nothing about our approach is particularly tied to the torus $\mathbb{T}^d$. 
Indeed, the main inequality
$$ W_2\left( \frac{1}{N} \sum_{k=1}^{N} \delta_{x_k} , dx \right) \lesssim \inf_{t > 0}  ~~ \sqrt{t} + \left( \sum_{k \in \mathbb{Z}^d \atop k \neq 0} \frac{e^{-\|k\|^2 t}}{\|k\|^2} \left| \frac{1}{N} \sum_{n=1}^{N} e^{2\pi i \left\langle k, x_n \right\rangle} \right|^2\right)^{1/2}  $$
can be generalized just as easily to other manifolds. Let us fix a manifold $(M,g)$ and use $\phi_k$ denote the sequence of Laplacian eigenfunctions
$$ -\Delta \phi_k = \lambda_k \phi_k.$$
We assume that $\phi_0 = 1$ is the trivial (constant) eigenfunction and that they are normalized to $\|\phi_k\|_{L^2} = 1$. Then the inequality (see  \cite{stein0}) assumes the form
$$ W_2\left( \frac{1}{N} \sum_{k=1}^{N} \delta_{x_k} , dx \right) \lesssim_M \inf_{t > 0} ~~ \sqrt{t} + \left( \sum_{k =1}^{\infty} \frac{e^{- 2 \lambda_k t}}{\lambda_k} \left| \frac{1}{N} \sum_{n=1}^{N} \phi_k(x_n) \right|^2\right)^{1/2} .$$
For most manifolds, we do not have an explicit expression for the eigenfunctions $\phi_k$ and the inequality is thus of limited use. There exists a substitute inequality in cases where the Green's function $G(x, y)$ or good estimates for it are known \cite{stein3}. However, the Laplacian eigenfunctions are completely explicit on the sphere and are simply the classical spherical harmonics that have already been frequently used to define notions of discrepancy on the sphere (see e.g. 
\cite{freeden, grabner00, grabner0, grabner, nar}). 
We believe that our notion can be a useful addition. As an example of its usefulness, we give the general version of the result above.

\begin{theorem} Let $(M,g)$ be a compact manifold without boundary, normalized to have volume 1, and let $f:\mathbb{T} \rightarrow \mathbb{R}$ be differentiable. Then, for some constant $c_M > 0$ depending only on the manifold, we have
$$ \left|  \int_{\mathbb{M}} f(x) dx - \frac{1}{N} \sum_{k=1}^{N} f(x_k) \right| \leq c_{M} \inf_{t > 0}\left[ \sqrt{t} \| \nabla f \|_{L^{\infty}} + \left\| e^{t \Delta} \sum_{k=1}^{N}{ \delta_{x_k}} \right\|_{\dot H^{-1}} \| \nabla f\|_{L^2} \right]$$
\end{theorem}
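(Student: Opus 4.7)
The plan is to decompose the integration error by inserting the heat-smoothed empirical measure $e^{t\Delta}\mu$, where $\mu = \frac{1}{N}\sum_{k=1}^N \delta_{x_k}$, as an intermediary. For every $t>0$ the triangle inequality gives
\begin{align*}
\left| \int_M f\, dx - \int_M f\, d\mu \right| \leq \left| \int_M f\, d(dx - e^{t\Delta}\mu) \right| + \left| \int_M f\, d(e^{t\Delta}\mu - \mu) \right|,
\end{align*}
and the two pieces will be matched to the two summands in the bound.

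For the first term I would invoke $\dot H^1$--$\dot H^{-1}$ duality. Because $e^{t\Delta}$ fixes constants, both $dx$ and $e^{t\Delta}\mu$ have total mass $1$, so $dx - e^{t\Delta}\mu$ is mean-zero and pairing against $f$ is unaffected by replacing $f$ with $f - \int_M f\, dx$. Integrating by parts against the potential $\Delta^{-1}(e^{t\Delta}\mu - dx)$ then yields $|\int f\, d(dx - e^{t\Delta}\mu)| \leq \|\nabla f\|_{L^2}\, \|e^{t\Delta}\mu\|_{\dot H^{-1}}$, matching the $\dot H^{-1}$ summand.

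For the second term I would exploit self-adjointness of the heat semigroup to transfer the smoothing onto the test function, $\int f\, d(e^{t\Delta}\mu) = \int (e^{t\Delta}f)\, d\mu$, so that
$$\left| \int f\, d(e^{t\Delta}\mu - \mu) \right| \leq \|e^{t\Delta}f - f\|_{L^\infty},$$
since $\mu$ is a probability measure. Writing $e^{t\Delta}f(x) - f(x) = \int p_t(x,y)(f(y)-f(x))\, dy$ and applying the Lipschitz bound $|f(y)-f(x)| \leq \|\nabla f\|_{L^\infty} d(x,y)$ reduces matters to controlling $\int p_t(x,y)\, d(x,y)\, dy$. Cauchy--Schwarz against the probability measure $p_t(x,y)dy$, combined with the manifold heat-kernel dispersion estimate $\int p_t(x,y)\, d(x,y)^2\, dy \lesssim_M t$, produces the desired $\sqrt{t}\,\|\nabla f\|_{L^\infty}$ factor. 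Taking the infimum over $t>0$ completes the proof.

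The only real obstacle is obtaining the dispersion bound $\int p_t(x,y)d(x,y)^2 dy \lesssim_M t$ uniformly in $x \in M$ with a constant depending only on $(M,g)$. On $\mathbb{T}^d$ it is immediate from the Fourier representation, and on a general compact manifold without boundary it follows from the standard short-time Gaussian estimate $p_t(x,y) \lesssim t^{-d/2} \exp(-c\, d(x,y)^2/t)$, so no input beyond compactness is required.
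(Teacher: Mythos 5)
Your proposal is correct and follows essentially the same route as the paper: split the error by inserting the heat-smoothed empirical measure $e^{t\Delta}\mu$, bound $|\int f\,d(dx - e^{t\Delta}\mu)|$ by $\dot H^1$--$\dot H^{-1}$ duality, and control $|\int f\,d(\mu - e^{t\Delta}\mu)|$ via the heat kernel and the Lipschitz bound. The only cosmetic difference is that you pass through Cauchy--Schwarz and the second moment $\int p_t(x,y)d(x,y)^2\,dy \lesssim_M t$, whereas the paper bounds the first moment $\int p_t(x,y)|x-y|\,dy \lesssim_M \sqrt{t}$ directly from Aronson's Gaussian estimate; these are interchangeable.
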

Alternatively, rewriting the Sobolev norm in terms of the spectral expansion, we could also write the upper bound on the integration error as
$$\inf_{t > 0}\left[ \sqrt{t} \| \nabla f \|_{L^{\infty}} + \left( \sum_{k=1}^{\infty} \frac{e^{- 2\lambda_k t}}{\lambda_k} \left| \frac{1}{N} \sum_{n=1}^{N} \phi_k(x_n) \right|^2  \right)^{1/2}\| \nabla f\|_{L^2} \right].$$
One possible application is to estimate the error of points chosen randomly with respect to the volume measure $dx$. We observe, from $L^2-$orthogonality of the Laplacian eigenfunctions, that if $(x_n)_{n=1}^{N}$ are chosen independently at random, then
\begin{align*}
\mathbb{E} \sum_{k =1}^{\infty} \frac{e^{- 2\lambda_k t}}{\lambda_k} \left| \frac{1}{N} \sum_{n=1}^{N} \phi_k(x_n) \right|^2 &= \sum_{k =1}^{\infty} \frac{e^{- 2\lambda_k t}}{\lambda_k}  \frac{1}{N^2} \sum_{n, \ell=1}^{N} \mathbb{E} \phi_k(x_n) \phi_k(x_{\ell}) \\
&= \frac{1}{N} \sum_{k =1}^{\infty} \frac{e^{- 2\lambda_k t}}{\lambda_k}
\end{align*}
Weyl's Theorem implies that, on a compact $d-$dimensional manifold, $\lambda_k \sim k^{2/d}$. For example, on $d-$dimensional manifolds with $d \geq 3$, we have (using Lemma 1 from below), for $0 < t < 1/2$,
$$  \frac{1}{N} \sum_{k =1}^{\infty} \frac{e^{- 2\lambda_k t}}{\lambda_k} \lesssim_M \frac{1}{N} \sum_{ k =1 }^{\infty} \frac{e^{-  k^{2/d} t}}{k^{2/d}} \lesssim_d  \frac{1}{N}t^{-\frac{d-2}{2}}.$$
Minimizing in $t$ suggests the value
$$ t^{1/2} = \frac{1}{N^{1/d}} \left(  \frac{\|\nabla f\|_{L^2}}{ \|\nabla f\|_{L^{\infty}}} \right)^{2/d}$$
resulting in the `typical bound' for random points
$$ \left|  \int_{M} f(x) dx - \frac{1}{N} \sum_{k=1}^{N} f(x_k) \right| \lesssim \| \nabla f\|_{L^{\infty}}^{\frac{d-2}{d}}  \| \nabla f\|_{L^{2}}^{\frac{2}{d}} N^{-1/d}.$$
However, this is inferior to classical Monte-Carlo and thus perhaps not useful.

\section{Proofs}

\subsection{A recurring computation.} We collect a simple Lemma that will reappear in several different arguments.

\begin{lemma} We have, for $m + d \geq 1$, the estimate
$$ \sum_{k \in \mathbb{Z}^d \atop k \neq 0} e^{-\|k\|^2 t} \| k \|^m \lesssim_{m,d} t^{-\frac{m+d}{2}}.$$
If $m+d=0$, then we have, for $0 < t < 1/2$,
$$ \sum_{k \in \mathbb{Z}^d \atop k \neq 0} e^{-\|k\|^2 t} \| k \|^m \lesssim_{m,d}  \log{\left(\frac{1}{t}\right)}.$$
\end{lemma}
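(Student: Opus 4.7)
The plan is to dyadically decompose $\mathbb{Z}^d \setminus \{0\}$ into annular shells and reduce matters to a one-dimensional geometric-type sum, which I would then analyze by splitting around the natural critical scale $4^j t \sim 1$.

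First I would partition $\mathbb{Z}^d \setminus \{0\} = \bigsqcup_{j \geq 0} A_j$, where $A_j = \{k \in \mathbb{Z}^d : 2^j \leq \|k\| < 2^{j+1}\}$. A standard volume-comparison argument yields $|A_j| \leq C_d \cdot 2^{jd}$ for every $j \geq 0$. On $A_j$ one has $\|k\|^m \leq C'_m \cdot 2^{jm}$ (with a constant absorbing the endpoint, irrespective of the sign of $m$) and $e^{-\|k\|^2 t} \leq e^{-4^j t}$, so grouping terms shell by shell gives
\begin{equation*}
\sum_{\substack{k \in \mathbb{Z}^d \\ k \neq 0}} e^{-\|k\|^2 t} \|k\|^m \; \lesssim_{m,d} \; \sum_{j=0}^{\infty} 2^{j(m+d)} e^{-4^j t}.
\end{equation*}

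Next, writing $\alpha := m+d$, I would analyze this scalar sum by splitting at the critical index $j_0 := \lfloor \tfrac{1}{2}\log_2(1/t) \rfloor$, which is chosen so that $4^{j_0} t \asymp 1$. For $j \leq j_0$ the exponential factor is bounded below by $e^{-1}$, so in the regime $\alpha \geq 1$ the geometric sum $\sum_{j=0}^{j_0} 2^{j\alpha}$ is dominated by its largest term, which is of order $2^{j_0 \alpha} \asymp t^{-\alpha/2}$. For $j > j_0$, the substitution $\ell = j - j_0$ converts the tail into $t^{-\alpha/2} \sum_{\ell \geq 1} 2^{\ell \alpha} e^{-4^{\ell}}$, a convergent series whose value is a constant depending only on $\alpha$. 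Summing the head and the tail yields the bound $t^{-(m+d)/2}$.

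In the borderline case $\alpha = 0$, the head becomes $\sum_{j=0}^{j_0} e^{-4^j t} \asymp j_0 \asymp \log(1/t)$ while the tail still sums to $\sum_{\ell \geq 1} e^{-4^\ell} = O(1)$; the restriction $t < 1/2$ guarantees $\log(1/t) \geq \log 2 > 0$, so the $O(1)$ tail is absorbed into $O(\log(1/t))$ with an appropriate multiplicative constant. The only mild subtlety is the bookkeeping at $j=0$ (where $\|k\|$ may equal $1$, so one must bound $\|k\|^m$ by $2^{jm}$ up to a constant rather than by the wrong endpoint when $m<0$); this is handled directly by absorbing an $m$-dependent constant into $C'_m$, and no genuine obstacle arises.
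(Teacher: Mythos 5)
Your proof is correct and follows essentially the same strategy as the paper: reduce the lattice sum to a scalar sum by counting lattice points in annular shells, then estimate that scalar quantity. The only cosmetic difference is that you use dyadic shells $2^j \leq \|k\| < 2^{j+1}$ and handle the resulting sum $\sum_j 2^{j(m+d)}e^{-4^jt}$ directly by splitting at the critical index $4^{j_0}t\asymp 1$, whereas the paper uses unit-width annuli to reduce to $\sum_{\ell\geq 1}e^{-\ell^2 t}\ell^{m+d-1}$ and then compares to an explicit integral (the exponential integral when $m+d=0$, a Gaussian moment when $m+d\geq 1$).
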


\begin{proof} By moving to polar coordinates noting that, for all $\ell \geq 1$,
$$ \# \left\{ k \in \mathbb{Z}^d \setminus \left\{0\right\}: \ell \leq \|k\| < \ell +1 \right\} \leq c_d \ell^{d-1},$$
we can reduce the sum to a one-dimensional quantity
$$ \sum_{k \in \mathbb{Z}^d \atop k \neq 0} e^{-\|k\|^2 t} \| k \|^m \lesssim_d \sum_{k \in \mathbb{Z} \atop k \neq 0} e^{-|k|^2 t} | k |^{m+d-1}.$$
If $m+d=0$, then we can easily bound the sum via
$$\sum_{k \in \mathbb{Z} \atop k \neq 0} e^{-|k|^2 t} | k |^{-1} \lesssim \int_{1}^{\infty}{ \frac{e^{-x^2 t}}{x} dx}$$
This integral is the complete gamma function and can be rewritten in terms of the exponential integral via
$$ \int_{1}^{\infty}{ \frac{e^{-x^2 t}}{x} dx} = - \frac{1}{2}\mbox{Ei}(-t) = \frac{1}{2}\int_{t}^{\infty}{\frac{e^{-x}}{x} dx}.$$
It is easy to see that
$$ \int_{t}^{\infty}{\frac{e^{-x}}{x} dx} \lesssim \int_{t}^{1}{\frac{1}{x} dx} + \int_{1}^{\infty}{\frac{e^{-x}}{x} dx} \lesssim \log{\left(\frac{1}{t}\right)}.$$
It remains to deal with the case $m+d \geq 1$ where we estimate the sum via a different integral. Note that
$$  \sum_{k \in \mathbb{Z} \atop k \neq 0} e^{-|k|^2 t} | k |^{m+d-1} \lesssim \int_{0}^{\infty}{ e^{-x^2 t} x^{m+d-1} dx} = c_{m+d} t^{-\frac{m+d}{2}}.$$
\end{proof}

\subsection{Random Walks: Proof of Theorem 4.}
\begin{proof} 
We have that the measure $\mu_k$ describing the distribution of the random walk after $k$ steps is given by
$$ \mu_k = \mu_{k-1} * \mu \qquad \mbox{where}~*~\mbox{denotes convolution}$$
and 
$$ \mu = \frac{1}{2} \delta_{\alpha} + \frac{1}{2} \delta_{-\alpha}.$$
Therefore
$$ |\widehat{\mu_k}(\ell)| = |\widehat{\mu}(\ell)|^{k} = |\cos{(2\pi \ell \alpha)}|^k.$$
Using Peyre's estimate, we reduce the problem to estimating the sum
$$ W_2(\mu_k, dx) \leq \left( \sum_{\ell \in \mathbb{Z} \atop \ell \neq 0}{ \frac{|\cos{(2\pi \ell \alpha)}|^{2k}}{\ell^2} } \right)^{1/2}.$$
We use, as we often do, that $\ell \alpha$ cannot be close to an integer for many values of $\ell$. More precisely, we define the $k$ sets $$ I_j = \left\{ \ell \in  \mathbb{Z} \setminus \left\{ 0 \right\}:  \frac{j}{k} \leq \left\{ \ell \alpha\right\} \leq  \frac{j+1}{k}  \right\} \quad \mbox{for} \quad 0 \leq j \leq k-1.$$
Since $\alpha$ is badly approximable, we have that two distinct elements $\ell_1, \ell_2 \in I_j$ satisfy $| \ell_1 - \ell_2| \gtrsim_{\alpha} k.$
We can now write
$$ \sum_{\ell \in \mathbb{Z} \atop \ell \neq 0}{ \frac{|\cos{(2\pi \ell \alpha)}|^{2k}}{\ell^2}} = \sum_{j=0}^{k-1} \sum_{\ell \in I_j}{ \frac{|\cos{(2\pi \ell \alpha)}|^{2k}}{\ell^2}}.$$
We have
\begin{align*}
  \sum_{\ell \in I_j}{ \frac{|\cos{(2\pi \ell \alpha)}|^{2k}}{\ell^2}}  &\lesssim    \max_{x \in I_j} |\cos{(2\pi x)}|^{2k} \sum_{\ell \in I_j}{ \frac{1}{\ell^2} }.
\end{align*}
However, the smallest element in $I_j$ is $\gtrsim_{\alpha} k/(j+1)$ and any two consecutive elements are $\gtrsim_{\alpha} k$ separated implying that
$$ \sum_{\ell \in I_j}{ \frac{1}{\ell^2} } \lesssim_{\alpha} \sum_{h=0}^{\infty}{\frac{1}{(k/(j+1) + h k)^2}} \lesssim_{\alpha} \frac{(j+1)^2}{k^2}.$$

However, we also have
 $$\max_{x \in I_j} |\cos{(2\pi x)}|^{2k} \leq \left(1 - \left( \frac{\min\left\{j, k-j\right\}}{k}\right)^2 \right)^{2k}.$$
 By symmetry, it suffices to sum $j$ up to $k/2$. We then obtain
\begin{align*}
 \sum_{0 \leq j \leq k/2} \max_{x \in I_j} |\cos{(2\pi x)}|^{2k} \sum_{\ell \in I_j}{ \frac{1}{\ell^2} }&\lesssim \sum_{0 \leq j \leq k/2} \left(1 - \frac{j^2}{k^2}\right)^{2k} \frac{(j+1)^2}{k^2}\\
&\lesssim k \int_{0}^{1}{ \left(1 - x^2\right)^{2k} x^2 dx} \lesssim \frac{1}{\sqrt{k}}.
 \end{align*}
\end{proof}

\subsection{Kronecker sequences: Proof of Theorem 5}
\begin{proof}
Let us consider the Kronecker sequence 
$$ x_n = (n \alpha_1, n \alpha_2, n \alpha_3, \dots, n \alpha_d) ~\mbox{mod}~1.$$
We assume that $\alpha$ is badly approximable, which means that, for some universal constant $c_{\alpha} > 0$ and all integers $q \neq 0$, we have
$$ \max_{1 \leq j \leq d}{ \| \alpha_j q\|} \geq \frac{c_{\alpha}}{q^{1/d}},$$
where $\left\| \cdot \right\|$ is the distance to the nearest integer.
Khintchine's transference principle (see, for example, the textbook of Schmidt \cite{schm3}) states that $\alpha$
is badly approximable if and only if the linear form induced by $\alpha$ is badly approximable, i.e. if for all $0 \neq k \in \mathbb{Z}^d$
$$ \left\| \left\langle k, \alpha \right\rangle\right\| \geq \frac{c_{\alpha}}{\|k\|^{d}},$$
where $\left\| \cdot \right\|$ is the distance to the nearest integer and $c_{\alpha}$ is a universal constant. This is the property we are going to use.
Observe that, abbreviating 
$$ \mu = \frac{1}{N} \sum_{k=1}^{N}{ \delta_{x_k}},$$
then, arguing via the geometric series,
\begin{align*}
|\widehat{\mu}(k)| = \frac{1}{N} \left|\sum_{\ell=1}^{N}{ e^{2\pi i \left\langle k, x_{\ell} \right\rangle}}\right| = \frac{1}{N} \left|\sum_{\ell=1}^{N}{ e^{2\pi i \ell \left\langle k, \alpha \right\rangle}}\right| \leq  \frac{2}{N}\frac{1}{  \left\| \left\langle k, \alpha \right\rangle \right\|},
\end{align*}
where $\left\| \left\langle k, \alpha \right\rangle \right\|$ is the distance to the nearest integer.
We are left with estimating
$$ W_2(\mu, dx) \leq \inf_{t > 0} \left[\sqrt{t} + \frac{2}{N}\left( \sum_{k \neq 0} \frac{e^{-\|k\|^2 t}}{\|k\|^2}\frac{1}{ \left\|\left\langle k, \alpha \right\rangle\right\|^2}  \right)^{1/2}\right].$$
We split frequencies into dyadic scales and first estimate
$$ \sum_{2^{\ell} \leq \|k\| \leq 2^{\ell + 1}}  \frac{1}{ \left\|\left\langle k, \alpha \right\rangle\right\|^2}.$$
Clearly, for any $k_1 \neq k_2$ in this dyadic scale, we have 
$$\left| \left\langle k_1 - k_2, \alpha \right\rangle \right| \gtrsim_{\alpha} \|k_1-k_2\|^{-d} \geq 2^{-\ell d}.$$
This means that these $\sim 2^{\ell d}$ terms are roughly evenly spread and we have
$$ \sum_{2^{\ell} \leq \|k\| \leq 2^{\ell + 1}}\frac{1}{ \left\|\left\langle k, \alpha \right\rangle\right\|^2} \lesssim_{\alpha} \sum_{h=1}^{2^{\ell \cdot d}} \frac{1}{(h 2^{-\ell \cdot d})^2 } \lesssim 2^{2 \ell \cdot d}.$$
This shows that the typical size of such a term (of which there are $2^{\ell \cdot d}$) is $2^{\ell \cdot d}$ and thus we can estimate a dyadic block by increasing the multiplier as in
\begin{align*}
 \sum_{2^{\ell} \leq \|k\| \leq 2^{\ell + 1}} \frac{e^{-\|k\|^2 t}}{\|k\|^2}\frac{1}{ \left\|\left\langle k, \alpha \right\rangle\right\|^2}  &\leq 
 \left( \max_{2^{\ell} \leq \|k\| \leq 2^{\ell + 1}} \frac{e^{-\|k\|^2 t}}{\|k\|^2} \right) \sum_{2^{\ell} \leq \|k\| \leq 2^{\ell + 1}} \frac{1}{ \left\|\left\langle k, \alpha \right\rangle\right\|^2} \\
  &\lesssim_{\alpha}
 \left( \max_{2^{\ell} \leq \|k\| \leq 2^{\ell + 1}} \frac{e^{-\|k\|^2 t}}{\|k\|^2} \right) 2^{2 \ell \cdot d}\\
 &\lesssim_d  \left( \max_{2^{\ell} \leq \|k\| \leq 2^{\ell + 1}} \frac{e^{-\|k\|^2 t}}{\|k\|^2} \right)  \sum_{2^{\ell} \leq \|k\| \leq 2^{\ell + 1}} 2^{\ell \cdot d} \\
&\leq
\sum_{2^{\ell} \leq \|k\| \leq 2^{\ell + 1}} \frac{e^{-\|k\|^2 (t/2)}}{(\|k\|/2)^2} 2^{\ell \cdot d}.
\end{align*}
This, in turn, can be rewritten as the kind of sum already studied above since
$$ \sum_{2^{\ell} \leq \|k\| \leq 2^{\ell + 1}} \frac{e^{-\|k\|^2 (t/2)}}{(\|k\|/2)^2} 2^{\ell \cdot d} \lesssim_d \sum_{2^{\ell} \leq \|k\| \leq 2^{\ell + 1}} \frac{e^{-\|k\|^2 (t/2)}}{\|k\|^2} \|k\|^{d}$$
Altogether, using the Lemma above as well as $d \geq 2$,
$$\sum_{k \in \mathbb{Z}^d \atop k \neq 0} \frac{e^{-\|k\|^2 t}}{\|k\|^2}\frac{1}{ \left\|\left\langle k, \alpha \right\rangle\right\|^2} \lesssim \sum_{k \in \mathbb{Z}^d \atop k \neq 0} e^{-\|k\|^2 t} \|k\|^{d-2}  \lesssim \frac{1}{t^{d-1}}.$$
Therefore
$$ W_2(\mu, dx) \lesssim_{\alpha} \sqrt{t} + \frac{2}{N} \frac{1}{t^{\frac{d-1}{2}}}$$
which implies, for the choice $t=N^{-2/d}$ that
$$ W_2(\mu, dx) \lesssim_{\alpha} \frac{1}{N^{1/d}}.$$
\end{proof}

\subsection{Numerical Integration: Proof of Theorem 7}
The proof is based on a simple Poincar\'e-type inequality for Lipschitz functions vanishing at a fixed point. 
\begin{lemma}
Let $f:[0,1]^d \rightarrow \mathbb{R}$ be differentiable and assume that $$f(1/2, 1/2, \dots, 1/2) = 0.$$ Then we have the estimate
$$ \left| \int_{[0,1]^d}{f(x) dx} \right| \leq c_d   \| \nabla f\|_{L^{\infty}}^{\frac{d-1}{d}}  \| \nabla f\|_{L^{1}}^{\frac{1}{d}}.$$
\end{lemma}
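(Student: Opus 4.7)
The plan is to exploit the hypothesis $f(1/2,\ldots,1/2)=0$ via the fundamental theorem of calculus along radial lines from the center $c=(1/2,\ldots,1/2)$, and then split the integral into a near-field/far-field decomposition at a radius $r$ to be optimized.

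First I would write, for any $x \in [0,1]^d$,
$$f(x) = \int_0^1 \nabla f\bigl(c+t(x-c)\bigr)\cdot (x-c)\, dt,$$
or, equivalently in polar form with $x = c + s\omega$, $\omega \in S^{d-1}$,
$$|f(c+s\omega)| \le \int_0^s |\nabla f(c+\sigma\omega)|\, d\sigma.$$
Fix a parameter $r \in (0,\sqrt{d}/2]$ to be chosen. On the ball $B_r(c)$ I use the trivial estimate $|f(x)| \le r\,\|\nabla f\|_{L^\infty}$, contributing at most $c_d\, r\, \|\nabla f\|_{L^\infty}$ to $\int|f|$.

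For the far-field region $\{|x-c|>r\}$, I bound $|f(c+s\omega)| \le r\|\nabla f\|_{L^\infty} + \int_r^s |\nabla f(c+\sigma\omega)|\,d\sigma$, convert to polar coordinates, and apply Fubini: after switching the order of integration in $s$ and $\sigma$, and using that the outer $s$-integral picks up only a bounded length factor (since all radii are $\lesssim_d 1$), I arrive at
$$\int_{|x-c|>r}|f(x)|\,dx \;\lesssim_d\; r\,\|\nabla f\|_{L^\infty}\;+\; \int_{S^{d-1}} \int_r^{\sqrt{d}/2} |\nabla f(c+\sigma\omega)|\, d\sigma\, d\omega.$$
The key trick is the weight conversion
$$\int_r^{\sqrt{d}/2}|\nabla f(c+\sigma\omega)|\,d\sigma \;\le\; r^{-(d-1)}\int_r^{\sqrt{d}/2}|\nabla f(c+\sigma\omega)|\,\sigma^{d-1}\,d\sigma,$$
which after reassembling the polar integration yields exactly $r^{-(d-1)}\|\nabla f\|_{L^1}$ (up to a dimensional constant). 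This is the step that trades the $L^\infty$ norm for the $L^1$ norm at the price of the factor $r^{-(d-1)}$.

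Combining the two regions gives
$$\Bigl|\int_{[0,1]^d} f(x)\,dx\Bigr| \;\le\; c_d\Bigl( r\,\|\nabla f\|_{L^\infty} + r^{-(d-1)}\|\nabla f\|_{L^1}\Bigr),$$
and optimizing in $r$ by taking $r = (\|\nabla f\|_{L^1}/\|\nabla f\|_{L^\infty})^{1/d}$ balances the two terms and produces the claimed bound. The only mild obstacle is handling the edge case where this optimal $r$ exceeds $\sqrt{d}/2$; but since $\|\nabla f\|_{L^1} \le \|\nabla f\|_{L^\infty}$ on the unit cube, in that regime the trivial estimate $|\int f| \le \sqrt{d}/2\cdot \|\nabla f\|_{L^\infty}$ is already comparable to $\|\nabla f\|_{L^\infty}^{(d-1)/d}\|\nabla f\|_{L^1}^{1/d}$ and finishes the proof. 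The main technical care is in the Fubini swap and verifying that the radial polar parametrization of $[0,1]^d$ around its center preserves the $L^1$ norm up to the dimensional constant.
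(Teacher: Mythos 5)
Your proof is correct, and it takes a genuinely different route from the paper's. The paper first reduces to a Riesz-potential bound via the Morrey step, $\left|\int_{[0,1]^d} f\,dx\right| \lesssim \int_{[0,1]^d} |\nabla f(x)|\,|x|^{1-d}\,dx$, then invokes O'Neil's H\"older inequality for Lorentz spaces (using $|x|^{1-d}\in L^{d/(d-1),\infty}$) to land on $\|\nabla f\|_{L^{d,1}}$, and finally bounds that Lorentz norm by $d\,\|\nabla f\|_{L^\infty}^{(d-1)/d}\|\nabla f\|_{L^1}^{1/d}$ via H\"older on the layer-cake integral. You instead bypass Lorentz space machinery entirely: after the same fundamental-theorem-of-calculus-along-rays and polar/Fubini step, you split near- and far-field at a radius $r$, use the crude $L^\infty$ bound inside $B_r(c)$, trade $\sigma^{d-1}\ge r^{d-1}$ in the annular region to recover $r^{-(d-1)}\|\nabla f\|_{L^1}$, and then optimize $r$. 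This parameter-optimization is exactly the elementary proof of the relevant weak-type/Lorentz interpolation estimate, so the two arguments are morally equivalent, but yours is self-contained (no reference to $L^{p,q}$ spaces or O'Neil) and gives explicit control over constants, at the cost of a slightly longer computation and the edge-case check when the optimal $r$ exceeds $\sqrt{d}/2$, which you handle correctly using $\|\nabla f\|_{L^1}\le \|\nabla f\|_{L^\infty}$ on the unit cube.
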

The inequality is not new and follows from the combination of two known results. The inequality 
$$ \left| \int_{[0,1]^d}{f(x) dx} \right| \lesssim \int_{[0,1]^d} \frac{|\nabla f|}{|x|^{d-1}} dx $$
is used as a first step in the proof of Morrey's inequality (see Evans \cite[\S 5.6.2]{evans}). This is now combined with an interpolation estimate: it is easy to see that the function $g(x) = |x|^{1-d}$ is contained in the Lorentz space $L^{\frac{d}{d-1}, \infty}$. Thus, by the Holder inequality in Lorentz spaces due to O'Neil \cite{oneil}, we have
$$  \int_{[0,1]^d} \frac{|\nabla f|}{|x|^{d-1}} dx \lesssim \|f\|_{L^{d,1}}.$$
We recall the definition of the $L^{d,1}$ norm and use the H\"older inequality to obtain
\begin{align*}
\|f\|_{L^{d,1}} &= d\cdot  \| \lambda \cdot |\left\{ \left|f\right| > \lambda \right\}|^{1/d} \|_{L^1(\frac{d \lambda}{\lambda})} \\
&= d\cdot \int_{0}^{\infty} |\left\{ \left|f\right| > \lambda \right\}|^{1/d} d \lambda \\
&=  d\cdot \int_{0}^{\|f\|_{L^{\infty}}} |\left\{ \left|f\right| > \lambda \right\}|^{1/d} d \lambda \\
&\leq d\cdot \|f\|_{L^{\infty}}^{\frac{d-1}{d}} \cdot \left( \int_{0}^{\infty} \left|\left\{ |f| > \lambda \right\}\right| d\lambda \right)^{\frac{1}{d}} = d\cdot\|f\|_{L^{\infty}}^{\frac{d-1}{d}} \| f\|_{L^1}^{\frac{1}{d}}.
\end{align*}
Using this simple statement, we can now prove Theorem 7.

\begin{proof}[Proof of Theorem 7.] The proof of Theorem 7 follows easily from the Lemma which we apply, in isolation, to each fundamental cell of size $N^{-1/d}$. Rescaling the inequality in Lemma 2 then shows that for any such box $B=[0, N^{-1/d}]^d$, we have
$$ \left| \int_{B}{f(x) dx-\frac{1}{N} f(x_k)} \right| \leq  \frac{c_d}{N}    \| \nabla f\|_{L^{\infty}(B)}^{\frac{d-1}{d}}  \| \nabla f\|_{L^{1}(B)}^{\frac{1}{d}}.$$
Summing over all boxes leads to
\begin{align*}
 \left|  \int_{[0,1]^d} f(x) dx - \frac{1}{N} \sum_{k=1}^{N} f(x_k) \right| &\lesssim   \frac{\| \nabla f\|^{\frac{d-1}{d}}_{L^{\infty}}}{N}  \sum_{B} \| \nabla f\|^{\frac{1}{d}}_{L^{1}}  \\
   &\lesssim    \frac{\| \nabla f\|^{\frac{d-1}{d}}_{L^{\infty}}}{N}  \left(\sum_{B} \| \nabla f\|_{L^{1}}\right)^{1/d} \left( \sum_{B}1\right)^{\frac{d-1}{d}}\\
   &=  \frac{\| \nabla f\|^{\frac{d-1}{d}}_{L^{\infty}}}{N} \| \nabla f\|^{1/d}_{L^1} N^{\frac{d-1}{d}}\\
   &\leq \| \nabla f\|^{\frac{d-1}{d}}_{L^{\infty}}\| \nabla f\|^{1/d}_{L^1} N^{-1/d}.
\end{align*}
\end{proof}
We emphasize that the argument by itself actually yields a slightly stronger result in terms of local $L^1-$norms over $N^{-1/d}-$boxes
 $$\left|  \int_{[0,1]^d} f(x) dx - \frac{1}{N} \sum_{k=1}^{N} f(x_k) \right| \lesssim   \frac{\| \nabla f\|^{\frac{d-1}{d}}_{L^{\infty}}}{N}  \sum_{B} \| \nabla f\|^{\frac{1}{d}}_{L^{1}}.$$
\textbf{Optimality.} We quickly construct an example showing that our result is optimal. Let us consider the Lipschitz function 
$$ f(x) = \min\left\{\varepsilon, \min_{1 \leq i \leq N}{ \|x - x_i\|} \right\}.$$
We only work in the regime where $\varepsilon \ll N^{-1/d}$ in which case
we see that 
$$  \left|  \int_{[0,1]^d} f(x) dx - \frac{1}{N} \sum_{k=1}^{N} f(x_k) \right| =    \int_{[0,1]^d} f(x) dx \geq \varepsilon \left(1 - c_d N \varepsilon^d\right)$$
while also observing that 
$$ \|\nabla f\|_{L^{\infty}}=1 \qquad \mbox{and} \qquad \| \nabla f\|_{L^1} \sim N \varepsilon^d.$$
By letting $\varepsilon \rightarrow 0$, we see that our estimate has the optimal exponents.

\subsection{A General Manifold Result: Proof of Theorem 8}
\begin{proof} The proof combines two estimates. We first replace the discrete measure
$$ \mu = \frac{1}{N}\sum_{k=1}^{N}{\delta_{x_k}}$$
by the smoothed measure $e^{t\Delta} \mu$. The second step of the argument is merely a duality estimate (or, alternatively, an application of the Cauchy-Schwarz inequality). The first step is comprised of the estimate
$$ \left| \int_{M} f d\mu - \int_{M} f   e^{t\Delta} \mu d x \right| \lesssim \sqrt{t} \left\| \nabla f \right\|_{L^{\infty}},$$
which can be understood in at least two different ways. We describe both of them. The first case is physical: we interpret the heat equation as a process
 that transports a Dirac measure to a nearby neighborhood. The physical scaling is that within $t$ units of time, the mass is transported roughly distance $\sqrt{t}$.
However, the effect of transporting mass is naturally aligned to the setting of a Lipschitz function since
$$  \left| \int_{M} f d\mu - \int_{M} f d \nu \right| \leq \| \nabla f \|_{L^{\infty}} W_1(\mu, \nu).$$
(This inequality becomes an equation in one dimension and is known as Monge-Kantorovich duality in that setting, see e.g. \cite{villani}). However, it is known
that (see e.g. \cite{stein0})
$$ W_1(\mu, e^{t\Delta} \mu) \lesssim_{M} \sqrt{t} \| \mu\|_{L^1}$$
and since $\mu$ is normalized to be a probability measure, we obtain $\| \mu\|_{L^1} = 1$ and the desired estimate.
The second step is more explicit. We introduce the heat kernel $p_t(x,y)$ as the solution of the heat equation started with the measure $\delta_x$ and run up to time $t$
and then evaluted in $y$. Then it follows from conservation of mass that
$$ \int_{M}{ p_t(x,y) dy} = 1$$
and the mean-value theorem implies
\begin{align*}
 \left| \int_{M} f(x) d\mu - \int_{M} f(x) e^{t\Delta} \mu dx \right| &= \left| \int_{M} \frac{1}{N} \sum_{k=1}^{N}{  \left( p_t(x_k, y) f(y) - f(x_k) \right)} dy\right|\\
 &\leq  \frac{1}{N} \sum_{k=1}^{N}{ \left| \int_{M}  p_t(x_k, y) f(y) - f(x_k) dy\right|} \\
  &\leq  \frac{1}{N} \sum_{k=1}^{N}{ \left| \int_{M}  p_t(x_k, y) f(y) - p_t(x_k,y) f(x_k) dy\right|} \\
   &\leq   \frac{1}{N} \sum_{k=1}^{N}{  \int_{M} \| \nabla f\|_{L^{\infty}} p_t(x_k, y) |x_k -y| dy}\\
   &\leq\| \nabla f\|_{L^{\infty}}   \max_{x \in M}   \int_{M} p_t(x, y) |x -y| dy.
 \end{align*}
However, the last term can be controlled using Aronson's estimate
$$ 
p_{t}(x,y)\leq\frac{c_{1}}{t^{n/2}}\exp\left(-\frac{|x-y|^{2}}{c_{2}t}\right),\quad\forall t>0,x,y\in M,
$$
where the constant $c_1, c_2$ depend only on the manifold. A simple computation then shows (see e.g. \cite{stein0}) that
$$   \int_{M} p_t(x_k, y) |x_k -y| dy \lesssim_{M} \sqrt{t}.$$
We now come to the final part of the argument. It remains to estimate the error
 $$ \left| \int_{M} f(x) dx - \int_{M} f(x) e^{t\Delta} \mu~  dx\right| \qquad \mbox{from above}.$$
We interpret this as an inner product
 $$ \left| \int_{M} f(x) dx - \int_{M} f(x) e^{t\Delta} \mu~ dx \right| = \left| \left\langle f, e^{t \Delta}\mu - 1\right\rangle \right|.$$
 A duality argument now shows that
 $$  \left| \left\langle f, e^{t \Delta}\mu - 1\right\rangle \right| \leq  \| f\|_{\dot H^{1}} \| e^{t \Delta}\mu \|_{\dot H^{-1}}$$
 which is the desired result. One could also avoid the language of functional analysis and estimate, after noticing that $e^{t\Delta} \mu - 1$
 has mean value 0 and that $\phi_k$ has mean value 0 for $k \geq 1$,
 \begin{align*}
  \left| \left\langle f, e^{t \Delta}\mu - 1\right\rangle \right| &=\left| \sum_{k=0}^{\infty}{ \left\langle f, \phi_k\right\rangle  \left\langle e^{t\Delta}\mu -1, \phi_k\right\rangle} \right|\\
   &=\left| \sum_{k=1}^{\infty}{ \lambda_k^{1/2} \left\langle f, \phi_k\right\rangle \lambda_k^{-1/2} \left\langle e^{t\Delta}\mu -1, \phi_k\right\rangle} \right| \\
   &\leq    \left(  \sum_{k=1}^{\infty}{ \lambda_k^{} \left\langle f, \phi_k\right\rangle^2}   \right)^{1/2}     \left(  \sum_{k=1}^{\infty}{ \lambda_k^{-1} \left\langle e^{t\Delta}\mu -1, \phi_k\right\rangle^2}\right)^{1/2} \\
      &=    \left(  \sum_{k=1}^{\infty}{ \lambda_k^{} \left\langle f, \phi_k\right\rangle^2}   \right)^{1/2}     \left(  \sum_{k=1}^{\infty}{ \lambda_k^{-1} \left\langle e^{t\Delta}\mu, \phi_k\right\rangle^2}\right)^{1/2}.
  \end{align*}
  As for the first term, we observe that 
$$
  \sum_{k=1}^{\infty}{ \lambda_k \left\langle f, \phi_k\right\rangle^2}   = \int_{M} (-\Delta f)f dx  = \int_{M} |\nabla f|^2 dx = \|\nabla f\|_{L^2}^2.
$$
As for the second sum, we observe that, using the self-adjointness of the heat propagator and the fact that $\phi_k$ is an eigenfunction of the Laplacian
$$ \left\langle e^{t\Delta}\mu, \phi_k\right\rangle = \left\langle \mu, e^{t\Delta}\phi_k\right\rangle = e^{-\lambda_k t} \left\langle \mu, \phi_k\right\rangle.$$

This then results in
$$  \left(  \sum_{k=1}^{\infty}{ \lambda_k^{-1} \left\langle e^{t\Delta}\mu, \phi_k\right\rangle^2}\right)^{1/2} = 
 \left(  \sum_{k=1}^{\infty}{ \frac{e^{-2 \lambda_k t}}{ \lambda_k^{}} \left\langle \mu, \phi_k\right\rangle^2}\right)^{1/2}$$
and concludes the desired result.
\end{proof}

\subsection{Integration Error of Kronecker Sequences: Proof of Theorem 6}
\begin{proof} Having proven Theorem 8, we can now outline a proof of Theorem 6 which follows quite easily by combining several of our existing arguments. We will make use of the inequality
$$ \left|  \int_{\mathbb{M}} f(x) dx - \frac{1}{N} \sum_{k=1}^{N} f(x_k) \right| \leq c_{M} \inf_{t > 0}\left[ \sqrt{t} \| \nabla f \|_{L^{\infty}} + \left\| e^{t \Delta} \sum_{k=1}^{N}{ \delta_{x_k}} \right\|_{\dot H^{-1}} \| \nabla f\|_{L^2} \right]$$
in the special case where the manifold is given by $M = \mathbb{T}^d$ and the set of points is given by
$$ x_n = (n \alpha_1, \dots, n \alpha_d) ~\mbox{mod}~1$$
where $\alpha$ is badly approximable. The only quantity that requires computation is the $\dot H^{-1}$ norm. This, however, was already done in the proof of Theorem 5 where we used that
$$
W_2(\mu, dx) \lesssim_d \sqrt{t} + W_2(e^{t\Delta} \mu, dx) 
$$
and then estimated that
$$ W_2(e^{t\Delta} \mu, dx)  \lesssim_{\alpha, d} \frac{1}{N} \frac{1}{t^{\frac{d-1}{2}}}.$$
This results in
$$ \left|  \int_{\mathbb{M}} f(x) dx - \frac{1}{N} \sum_{k=1}^{N} f(x_k) \right| \lesssim_{d,\alpha} \inf_{t > 0}\left[ \sqrt{t} \| \nabla f \|_{L^{\infty}} +\frac{1}{N} \frac{1}{t^{\frac{d-1}{2}}} \|\nabla f\|_{L^2} \right].$$
We set
$$ t = \frac{1}{N^{2/d}} \frac{\| \nabla f\|_{L^2}^{2/d}}{\| \nabla f\|_{L^{\infty}}^{2/d}}.$$
This results in
 $$ \left|  \int_{\mathbb{M}} f(x) dx - \frac{1}{N} \sum_{k=1}^{N} f(x_k) \right| \lesssim_{d, \alpha} \frac{1}{N^{1/d}} \|\nabla f\|_{L^2}^{\frac{1}{d}} \| \nabla f\|_{L^{\infty}}^{\frac{d-1}{d}}.$$
\end{proof}

\textbf{Acknowledgments.} We are grateful to the anonymous referee for the careful reading and helpful suggestions.

\end{document}